\def\doi#1{   {\href{http://dx.doi.org/#1}
   {{\mdseries\ttfamily DOI}}}}
\def\RR{{\mathbb R}}
\renewcommand{\phi}{\varphi}
\newcommand{\R}{\mathbb{R}}
\newcommand{\beeq}{\begin{equation}}\newcommand{\eneq}{\end{equation}}
\def \endprf{\hfill  {\vrule height6pt width6pt depth0pt}\medskip}
\def\<{\langle}             \def\>{\rangle}
\def\({\left(}                 \def\){\right)}
\numberwithin{equation}{section}
\newtheorem{thm}{Theorem}[section]
 \newtheorem{lem}[thm]{Lemma}
 \newtheorem{prop}[thm]{Proposition}
 \newtheorem{defn}[thm]{Definition}
\title[Global existence for non-isothermal ideal gas system]
{Global existence of strong solution to non-isothermal ideal gas system}
\author{Bin Han}
\address{Department of Mathetics, Hangzhou Dianzi University, Hangzhou, 310018, China}
\email{hanbin@hdu.edu.cn}
\author{Ning-An Lai}
\address{College of Mathematics and Computer Science, Zhejiang
Normal University, Jinhua 321004, China}
		\email{ninganlai@zjnu.edu.cn}
\author{Andrei Tarfulea}
\address{Department of Mathematics, Louisiana State University, Baton Rouge, LA 70803, USA}\email{tarfulea@lsu.edu}
\date{\today}
\begin{document}
\maketitle
\begin{abstract}

This paper aims to establish the global existence of strong solutions to a non-isothermal ideal gas model. We first show global well-posedness in the Sobolev space ($H^2(\RR^3)$) by using energy estimates. We then prove the  global well-posedness for small-data solutions in the critical Besov space by using Banach's fixed point theorem.

\end{abstract}
\tableofcontents

\section{Introduction}
\noindent

Starting from a given free energy, Lai-Liu-Tarfulea \cite{LLT} established a general framework for deriving non-isothermal fluid models by combining classical thermodynamic laws and the energetic variational approach(see \cite{GKL, HKL}). As an application, three full non-isothermal systems (the non-isothermal ideal gas, non-isothermal porous media, and non-isothermal generalized porous media equations) are established based on three specific free energies. What is more, under some appropriate assumption on the conductivity coefficient $\kappa_3$, a maximum/minimum principle is developed for the first two models by adapting an idea originally from the work \cite{Tar}. These maximum/minimum principles establish the positivity of the absolute temperature, which implies the thermodynamic consistency of the corresponding models.

However, \cite{LLT} does not address the long time behavior (existence and uniqueness) of the solution to the non-isothermal models mentioned, while it is the core theory for partial differential system. At present, there are many results on the existence and behavior of weak solutions to various non-isothermal fluid models; see \cite{FN2005, Fei2007, FN12, Fei, NoPe} for the Navier-Stokes-Fourier system, which is a powerful generalization of the classical Navier-Stokes equations and is used to model thermodynamic fluid flow, \cite{DL} for the non-isothermal general Ericksen-Leslie system, \cite{HLLL} for the non-isothermal Poisson-Nernst-Planck-Fourier system, and \cite{SL} for the Brinkman-Fourier system with ideal gas equilibrium.

This paper aims to study the global well-posedness of the following non-isothermal ideal gas system in $\R^3$:
\begin{equation}\label{idmodel}
\left \{
\begin{aligned}
&\partial_t\rho=\kappa_1\Delta(\rho\theta),\\
&\kappa_2(\rho\theta)_t-\kappa_1(\kappa_1+\kappa_2)\nabla\cdot
\left(\theta\nabla(\rho\theta)\right)=\nabla\cdot\left(\kappa_3\nabla\theta\right).\\
\end{aligned} \right.
\end{equation}
For the reader's convenience, we briefly sketch the construction of \eqref{idmodel}. As can be seen in the model, the main unknown variables are:
\begin{enumerate}
\item a non-negative measurable function $\rho=\rho(t, x)$ which denotes the mass density;
\item a positive measurable function $\theta=\theta(t, x)$ representing the absolute temperature.
\end{enumerate}
In addition, a vector field $u=u(t, x)$ denoting the velocity field of the fluid will be used as an intermediate variable.

For an ideal gas, we have the following definition of free energy
\begin{equation}\label{freeen}
\Psi(\rho, \theta)=\kappa_1\theta\rho\ln\rho-\kappa_2\rho\theta\ln\theta.
\end{equation}
Then the (specific) entropy of the system, denoted by $\eta$, and the (specific) internal energy, denoted by $e$, are connected to the free energy $\Psi$ by the standard Helmholtz relation (see formula (2.5.26) in the classical book \cite{Daf})
\begin{equation}\label{entropy}
\left \{
\begin{aligned}
 &\eta(\rho, \theta):=-\partial_{\theta}\Psi,\\
  &e(\rho, \theta):=\Psi-\partial_\theta\Psi\theta=\Psi+\eta\theta,\\
&\eta_{\theta}=\frac{\kappa_2\rho}{\theta}.\\
\end{aligned} \right.
\end{equation}
The total energy and total dissipation are then chosen to be
\[
E^\text{total}=\int_{\Omega_t^x}\Psi(\rho, \theta)dx, \quad\quad \mathcal{D}^\text{total}=\frac12\int_{\Omega_t^x}\rho u^2dx.
\]
Employing the energetic variational approach then establishes the following Darcy type diffusion law
\begin{equation}\label{p}
\left \{
\begin{aligned}
  &p=\Psi_\rho\rho-\rho=\kappa_1\rho\theta,\\
  &\nabla p=-\rho u,\\
  &\partial_{\theta}p=\kappa_1\rho.\\
\end{aligned}\right.
\end{equation}

We remark that, according to \cite{Berry, McQ}, the internal energy and pressure are both linearly proportional to the product of temperature and density.
It is easy to verify this fact by combining \eqref{freeen}, \eqref{entropy} and \eqref{p}.\\

Now, we rewrite the internal energy function in terms of the new state variables $\rho, \eta$, yielding
\begin{equation}
e_1(\rho, \eta)=e\left(\rho, \theta(\rho, \eta)\right),
\end{equation}
which then implies
\begin{equation}\label{deentropy}
\left \{
\begin{aligned}
&e_{1\eta}=\theta,~~~e_{1\rho}=\Psi_{\rho},\\
&\nabla p=\rho\nabla e_{1\rho}+\eta\nabla e_{1\eta}.
\end{aligned}\right.
\end{equation}
Recalling the continuity equation for a closed system
\begin{equation}\label{contin}
\begin{aligned}
  \rho_t+\nabla\cdot(\rho u)=0,
\end{aligned}
\end{equation}
combine this with the two classical thermodynamic laws, the first of which relates the rate of change of the internal energy
with dissipation and heat
\begin{equation}\label{firstlaw}
\begin{aligned}
\frac{de}{dt}=\nabla\cdot W+\nabla\cdot q,
\end{aligned}
\end{equation}
where $W$ denotes the amount of thermodynamic work done by the system on its surroundings and $q$ denotes the quantity of energy supplied to the system as heat. The second thermodynamic law describes the evolution of the entropy
\begin{equation}\label{secondlaw}
\begin{aligned}
\partial_t\eta+\nabla\cdot(\eta u)=\nabla\cdot \left(\frac{q}{\theta}\right)+\Delta,
\end{aligned}
\end{equation}
where $\Delta\ge 0$ denotes the rate of entropy production, and Fourier's law yields
\begin{equation}\label{Flaw}
\begin{aligned}
q=\kappa_3\nabla \theta,
\end{aligned}
\end{equation}
where $\kappa_3$ denotes the material conductivity (which may depend on $\rho$ and $\theta$). Combining \eqref{deentropy}, \eqref{contin}, \eqref{firstlaw}, \eqref{secondlaw}, and \eqref{Flaw}, we obtain
\begin{equation}\label{pre1}
\begin{aligned}
&\frac{de_1(\rho, \eta)}{dt}\\
=&e_{1\rho}\rho_t+e_{1\eta}\eta_t\\
=&e_{1\rho}\left(-\nabla\cdot(\rho u)\right)+e_{1\eta}\left(-\nabla\cdot(\eta u)+\nabla\cdot\left(\frac{q}{\theta}\right)+\Delta\right)\\
=&-\nabla\cdot\left(e_{1\rho}\rho u+e_{1\eta}\eta u\right)+\left(\rho\nabla e_{1\rho}+\eta\nabla e_{1\eta}\right)\cdot u+\theta\nabla\cdot\left(\frac{q}{\theta}\right)+\theta\Delta\\
=&\nabla\cdot W+\nabla p\cdot u+\nabla\cdot q-\frac{q}{\theta}\cdot\nabla\theta+\theta\Delta\\
=&\nabla\cdot W-\rho u^2+\nabla\cdot q-\frac{\kappa_3|\nabla\theta|^2}{\theta}+\theta\Delta.\\
\end{aligned}
\end{equation}
Therefore
\begin{equation}\label{delta}
\left \{
\begin{aligned}
W&=-\left(e_{1\rho}\rho+e_{1\eta}\eta\right)u,\\
\Delta&=\frac{1}{\theta}\left(\rho|u|^2+\frac{\kappa_3|\nabla\theta|^2}
{\theta}\right),\\
\end{aligned}\right.
\end{equation}
which in turn gives
\begin{equation}\label{pre2}
\begin{aligned}
\eta_t+\nabla\cdot(\eta u)
=&\eta_\theta(\theta_t+u\cdot\nabla\theta)+\eta_{\rho}(\rho_t+u\cdot\nabla\rho)+\eta\nabla\cdot u\\
=&\eta_\theta(\theta_t+u\cdot\nabla\theta)+\eta_{\rho}\left(-\rho\nabla\cdot u\right)+\eta\nabla\cdot u\\
=&\eta_\theta(\theta_t+u\cdot\nabla\theta)+\left(\eta-\eta_{\rho}\rho\right)\nabla\cdot u\\
=&\eta_\theta(\theta_t+u\cdot\nabla\theta)+\partial_{\theta}p\nabla\cdot u\\
=&\nabla\cdot\left(\frac{q}{\theta}\right)+\Delta\\
=&\nabla\cdot\left(\frac{q}{\theta}\right)+\frac{1}{\theta}
\left(\rho|u|^2+\frac{q\cdot\nabla\theta}{\theta}\right),
\end{aligned}
\end{equation}
which finally yields
\begin{equation}\label{pre3}
\eta_\theta(\theta_t+u\cdot\nabla\theta)+\partial_{\theta}p\nabla\cdot u
=\nabla\cdot\left(\frac{q}{\theta}\right)+\frac{1}{\theta}
\left(\rho|u|^2+\frac{q\cdot\nabla\theta}{\theta}\right).
\end{equation}

Combining \eqref{pre3}, \eqref{entropy} and \eqref{p} alows us to conclude that
\begin{equation}\label{idpre3}
\begin{aligned}
&\frac{\kappa_2\rho}{\theta}(\theta_t+u\cdot\nabla\theta)+\kappa_1\rho\nabla\cdot u\\
=&\nabla\cdot\left(\frac{\kappa_3\nabla\theta}{\theta}\right)+\frac{1}{\theta}
\left(-\kappa_1\nabla(\rho \theta)\cdot u+\frac{\kappa_3|\nabla\theta|^2}{\theta}\right),\\
\end{aligned}
\end{equation}
so that
\begin{equation}\label{idpre5}
\begin{aligned}
\kappa_2(\rho\theta)_t-\kappa_1(\kappa_1+\kappa_2)\nabla
\cdot\left(\theta\nabla(\rho\theta)\right)=\nabla\cdot
\left(\kappa_3\nabla\theta\right),
\end{aligned}
\end{equation}
which completes the derivation of the non-isothermal ideal gas model \eqref{idmodel}.\\
\\
Our main goal is to establish well-posedness for the system \eqref{idmodel}. Motivated by similar works on the classical Navier-Stokes equations (\cite{Da6, FK}), we first motivate our choice of working spaces. We observe that (\ref{idmodel}) is invariant under the transformation
\begin{align}
	\begin{split}\label{5.1}
		&(\rho(t,x), \theta(t,x))\longrightarrow(\rho(\lambda^2 t,\lambda x),
		\theta(\lambda^2 t,\lambda x)),\\
		&(\rho_0(x),\theta_0(x))\longrightarrow(\rho_0(\lambda x),\theta_0(\lambda x)).
	\end{split}
\end{align}
\begin{defn}\label{d1.1}
	A function space $E\subset\mathcal{S}'(\RR^3)\times\mathcal{S}'
	(\RR^3)$
	is called a critical space if the associated norm
	is invariant under the transformation (\ref{5.1}).
\end{defn}
Obviously $\dot{H}^{3/2}\times\dot{H}^{3/2}$
is a critical space for the initial data, but $\dot{H}^{3/2}$ is not included in $L^\infty$.
We cannot expect to get $L^\infty$ control on the density
and the temperature by taking $(\rho_0-1,\theta_0-1)\in \dot{H}^{3/2}\times \dot{H}^{3/2}$.
Moreover, the product between functions does not
extend continuously from
$\dot{H}^{3/2}\times\dot{H}^{3/2}$
to $\dot{H}^{3/2}$, so that we will run into
difficulties when estimating the nonlinear terms. Similar to the
Navier-Stokes system studied in \cite{Da6},
we could use homogeneous Besov spaces $\dot{B}^s_{2,1}(\RR^3)$
(defined in \cite{BCD}, Chapter 2). $\dot{B}_{2,1}^{3/2}$ is an algebra embedded
in $L^\infty$ which allows us to control the
density and temperature from above without requiring more
regularity on derivatives of $\rho_0$ and $\theta_0$.

Our first result proves global well-posedness for \eqref{idmodel} when the initial data is close to a stable equilibrium $(\underline\rho,\underline\theta)$ in the subcritical space $H^2\times H^2.$ The working space $X(T)$ is defined by the norm
  $$\aligned
\|u\|_{X(T)}&:=\sup\limits_{0\leq \tau\leq T}\|u(\tau)\|^2_{H^2}+\int_0^T	\Big(\|\nabla u\|^2_{H^2}+\|\partial_tu\|^2_{H^1}\Big)d\tau,
\endaligned$$
for any distribution $u$ and $T>0.$
 \begin{thm}\label{t1.2}
	Let $\underline\rho$, $\underline\theta>0$ be fixed constants. There exist two positive constants $c$ and $M$ such that for all $\rho_0$ and $\theta_0$ where $(\rho_0-\underline \rho, \theta_0-\underline\theta)\in H^2\times H^2$ and
	\begin{align}\label{4.1}
		\begin{split}
			\|\rho_0 - \underline\rho\|_{H^2}+	\|\theta_0 - \underline\theta\|_{H^2}\leq c,
		\end{split}
	\end{align}
the system (\ref{idmodel}) has a unique global solution $(\rho,\theta)$ with $(\rho - \underline\rho,\theta - \underline\theta) \in X(T)$ for all $T>0$.
Moreover, if we define $\widetilde\rho := \rho-\underline\rho$ and $\widetilde\theta := \theta - \underline\theta$, then
	\begin{align}\label{4.2}
	\begin{split}
	\|(\widetilde\rho,\widetilde\theta)\|_{X(T)}\leq\frac{1}{2}c.
	\end{split}
\end{align}
	\end{thm}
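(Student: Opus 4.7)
I would set $\widetilde\rho=\rho-\underline\rho$, $\widetilde\theta=\theta-\underline\theta$, $\bar\kappa_3:=\kappa_3(\underline\rho,\underline\theta)$, expand $\rho\theta=\underline\rho\,\underline\theta+\underline\rho\widetilde\theta+\underline\theta\widetilde\rho+\widetilde\rho\widetilde\theta$, and substitute $\theta\rho_t=\kappa_1\theta\Delta(\rho\theta)$ into the $(\rho\theta)_t$ term of the second equation. Collecting linear versus higher-order contributions, \eqref{idmodel} becomes the coupled quasi-linear parabolic system
\begin{align*}
\partial_t\widetilde\rho-\kappa_1\underline\theta\,\Delta\widetilde\rho-\kappa_1\underline\rho\,\Delta\widetilde\theta&=N_1(\widetilde\rho,\widetilde\theta),\\
\kappa_2\underline\rho\,\partial_t\widetilde\theta-\kappa_1^2\underline\theta^2\,\Delta\widetilde\rho-\bigl(\kappa_1^2\underline\theta\,\underline\rho+\bar\kappa_3\bigr)\Delta\widetilde\theta&=N_2(\widetilde\rho,\widetilde\theta),
\end{align*}
where $N_1,N_2$ are at most quadratic in $(\widetilde\rho,\widetilde\theta)$ and their first two spatial derivatives. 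A direct computation gives trace $>0$ and determinant $\kappa_1\underline\theta\,\bar\kappa_3/(\kappa_2\underline\rho)>0$ for the principal-symbol matrix, so the linearization is uniformly parabolic near equilibrium. This structural fact drives the whole proof.

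\textbf{Step 2: Iteration and a priori $X(T)$ estimate.} Next I would build approximate solutions by Picard iteration, freezing $N_1,N_2$ at the previous iterate and solving the linear parabolic system above with data $(\widetilde\rho_0,\widetilde\theta_0)\in H^2$; standard maximal regularity places each iterate in $X(T)$ on a short interval. The heart of the argument is then a closed energy estimate in $X(T)$. I would test each equation against $\widetilde\rho,\widetilde\theta$ and their first/second spatial derivatives, weighted by positive constants $A_1,A_2$ chosen so that the cross terms $\int\nabla\widetilde\rho\cdot\nabla\widetilde\theta$ and $\int\Delta\widetilde\rho\,\Delta\widetilde\theta$ arising from the off-diagonal Laplacians combine with the diagonal dissipation into a coercive quadratic form — feasible precisely because the principal matrix has positive determinant. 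Nonlinear terms are handled by the embedding $H^2(\R^3)\hookrightarrow L^\infty$ and Moser-type product estimates
\[
\|fg\|_{H^k}\les\|f\|_{H^k}\|g\|_{L^\infty}+\|f\|_{L^\infty}\|g\|_{H^k},\qquad k=0,1,2,
\]
producing bounds of the schematic shape $\|N_j\|_{H^k}\les \|(\widetilde\rho,\widetilde\theta)\|_{H^2}\|(\nabla\widetilde\rho,\nabla\widetilde\theta)\|_{H^2}$ for $k\le 1$. The $\int_0^T\|\partial_t\cdot\|_{H^1}^2$ piece of the $X(T)$-norm is then recovered algebraically from the equations once the spatial pieces are controlled.

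\textbf{Step 3: Closing the estimate, continuation, uniqueness.} The resulting inequality is of the form
\[
\|(\widetilde\rho,\widetilde\theta)\|_{X(T)}\le C_0\bigl(\|\widetilde\rho_0\|_{H^2}^2+\|\widetilde\theta_0\|_{H^2}^2\bigr)+C_0\bigl(\|(\widetilde\rho,\widetilde\theta)\|_{X(T)}\bigr)^{3/2}.
\]
Choosing $c$ small enough (relative to $C_0,\underline\rho,\underline\theta,\kappa_i,\bar\kappa_3$) so that the right-hand side stays below $c^2/4$ whenever $\|(\widetilde\rho,\widetilde\theta)\|_{X(T)}\le c$ and running a standard continuity/bootstrap argument gives \eqref{4.2} on any interval of existence. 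Since the a priori bound prevents $H^2$-blowup, the local solution from Step 2 extends to every $T>0$. Uniqueness follows by writing the equation satisfied by the difference of two solutions and running a Gronwall estimate in $L^\infty_tL^2\cap L^2_tH^1$.

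\textbf{Main obstacle.} The principal difficulty is the genuinely coupled parabolic structure inherited from the $(\rho\theta)_t$ term in the second equation: neither $\widetilde\rho$ nor $\widetilde\theta$ by itself satisfies a scalar parabolic equation with self-dissipation, and the diagonal $L^2$-based energy estimate alone is not coercive. The delicate point is identifying the correct weighted combination of test functions that turns the off-diagonal Laplacian coupling into a strictly positive bilinear form (this is where the positivity of the $2\times 2$ principal determinant enters crucially). Additionally, the nonlinearity $\nabla\theta\cdot\nabla(\rho\theta)$ sits at the maximal-regularity level, so it must be absorbed into the parabolic dissipation rather than controlled by softer estimates; this is what dictates the precise choice of the $X(T)$ norm.
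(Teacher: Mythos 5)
Your proposal follows essentially the same energy-method strategy as the paper: rewrite in perturbation form around $(\underline\rho,\underline\theta)$, close a weighted $L^2/\dot H^2$ estimate exploiting the coercivity of the coupled principal part, and run a small-data continuity argument with $H^2(\RR^3)\hookrightarrow L^\infty$ and Moser-type product estimates to control the nonlinearities. Your structural observation that the principal-symbol matrix has positive trace and determinant $\kappa_1\underline\theta\bar\kappa_3/(\kappa_2\underline\rho)$ is precisely the (implicit) reason the paper's explicit combination — weight $\kappa_1(1+\delta)$ on the $\widetilde\rho$-equation, weight $1$ on the $\widetilde\theta$-equation, with $\delta\kappa_1^2<2\bar\kappa_3$ — is coercive; the paper never states it this way, so your phrasing is a useful clarification.

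One point where the two arguments genuinely diverge is the treatment of $\int_0^T\|\partial_t\widetilde\theta\|^2_{H^1}$. You propose to recover this piece of the $X(T)$-norm algebraically from the equation after the spatial estimates are closed. The paper instead tests the $\widetilde\theta$-equation against $\partial_t\partial_i^k\widetilde\theta$ ($k=0,1$) to produce $\kappa_2\|\partial_t\partial_i^k\widetilde\theta\|_{L^2}^2$ directly as a dissipative term; this is needed because the right-hand side carries $\kappa_2\widetilde\rho\,\partial_t\widetilde\theta$, which generates $I_7=\kappa_2\int\widetilde\rho\,\partial_t\widetilde\theta\,\widetilde\theta$ and $J_5=\kappa_2\int\partial_i^2(\widetilde\rho\,\partial_t\widetilde\theta)\,\partial_i^2\widetilde\theta$ in the energy budget, and the paper absorbs the $\partial_t\widetilde\theta$ factor into that extra dissipation before closing the inequality. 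Your algebraic-recovery route is also workable — substituting $\partial_t\widetilde\theta\approx(\kappa_2(1+\widetilde\rho))^{-1}[(\kappa_1^2+\bar\kappa_3)\Delta\widetilde\theta+\kappa_1^2\Delta\widetilde\rho+\cdots]$ converts $I_7$ and $J_5$ into cubic terms controllable by the spatial dissipation once $\|\widetilde\rho\|_{H^2}$ is small — but you should note that the quasi-linear factor $(1+\widetilde\rho)^{-1}$ and the need to then re-derive $\partial_t\widetilde\theta\in L^2_tH^1$ make this step less automatic than "recovered algebraically" suggests; the paper's extra test against $\partial_t\partial_i^k\widetilde\theta$ is the device that keeps the bookkeeping linear.
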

Based on the above scaling analysis, it will suffice to prove an $H^2$ energy estimate for $\rho$ and $\theta$.

 Our second result then establishes the existence and uniqueness of a solution to the system \eqref{idmodel} for initial data close to a stable equilibrium $(\underline\rho,\underline\theta)$  in the critical space $\dot B_{2,1}^{3/2}\times \dot B_{2,1}^{3/2}$. For convenience, we assume that $\underline\rho=\underline\theta=1$. The working space $E(t)$ is then defined by
 $$E(T):=\left\{u\in \mathcal C\left([0,T],\dot B_{2,1}^{3/2}\right),\quad \nabla^2u\in L^1\left(0,T; \dot B_{2,1}^{3/2}\right)\right\},\quad T>0.$$
 \begin{thm}\label{t1.3}
	There exist two positive constants $c$ and $M$ such  that for all $(a_0,\widetilde\theta_0)\in \dot B_{2,1}^{3/2}\times \dot B_{2,1}^{3/2}$ with
	\begin{align}\label{1.20}
		\begin{split}
			\|a_0\|_{\dot B_{2,1}^{3/2}}+	\|\widetilde\theta_0\|_{\dot B_{2,1}^{3/2}}\leq \frac c{2M},
		\end{split}
	\end{align}
the system (\ref{idmodel}) has a unique global solution $(\rho,\theta)$ with initial data $\theta_0 = \widetilde\theta_0 + 1$ and $\rho_0 = 1/(1+a_0)$.
Moreover, if we define $\rho = 1/(1+a)$ and $\widetilde\theta = \theta - 1$, then for all $T>0$
	\begin{align}\label{1.21}
	\begin{split}
	\|(a,\widetilde\theta)\|_{E(T)}\leq c.
	\end{split}
\end{align}
	\end{thm}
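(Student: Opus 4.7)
The plan follows the critical-regularity strategy pioneered by Danchin for compressible Navier--Stokes: reformulate \eqref{idmodel} in the perturbation variables $(a,\widetilde\theta)$ so that the principal part becomes a $2{\times}2$ parabolic system with quadratic remainder, prove maximal regularity for the linearized problem in $\dot B^{3/2}_{2,1}$, and close a Banach fixed-point argument inside a small ball of $E(T)$.

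\textbf{Reformulation and linearization.} Writing $\rho\theta=(1+\widetilde\theta)/(1+a)$, so $\rho\theta-1=\widetilde\theta-a$ modulo quadratic terms, and substituting $\rho=1/(1+a)$ into the first equation of \eqref{idmodel} gives the identity $\partial_t a=-\kappa_1(1+a)^2\Delta(\rho\theta)$. Using this to eliminate $\partial_t a$ from the second equation, the system rewrites as
\begin{align*}
\partial_t a-\kappa_1\Delta a+\kappa_1\Delta\widetilde\theta &= F_1(a,\widetilde\theta),\\
\kappa_2\partial_t\widetilde\theta+\kappa_1^2\Delta a-(\kappa_1^2+\overline\kappa_3)\Delta\widetilde\theta &= F_2(a,\widetilde\theta),
\end{align*}
where $\overline\kappa_3:=\kappa_3(1,1)$ and $F_1,F_2$ are at least quadratic in $(a,\widetilde\theta)$ and their derivatives of order $\le 2$, entering smoothly through compositions such as $1/(1+a)$ and $\kappa_3(\rho,\theta)-\overline\kappa_3$. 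Setting $U=(a,\widetilde\theta)^{\top}$, the linear part reads $\partial_t U=B\Delta U+F$ with $B=\begin{pmatrix}\kappa_1 & -\kappa_1\\ -\kappa_1^2/\kappa_2 & (\kappa_1^2+\overline\kappa_3)/\kappa_2\end{pmatrix}$. Since $\mathrm{tr}\,B>0$ and $\det B=\kappa_1\overline\kappa_3/\kappa_2>0$, both eigenvalues of $B$ have positive real part and $B$ diagonalizes (over $\mathbb{C}$ if necessary), so the system is parabolic. Projecting onto the Littlewood--Paley blocks $\dot\Delta_q$ and performing an $L^2$ energy estimate in the diagonalizing basis produces heat-type decay at each dyadic frequency; summing against the $\ell^1$ weight of $\dot B^{3/2}_{2,1}$ yields the maximal-regularity bound
$$\|U\|_{L^\infty_T(\dot B^{3/2}_{2,1})}+\|\nabla^2U\|_{L^1_T(\dot B^{3/2}_{2,1})}\le M\left(\|U_0\|_{\dot B^{3/2}_{2,1}}+\|F\|_{L^1_T(\dot B^{3/2}_{2,1})}\right).$$

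\textbf{Nonlinear estimates and fixed point.} Because $\dot B^{3/2}_{2,1}$ is a Banach algebra embedded in $L^\infty$ and admits the composition estimate $\|\phi(a)-\phi(0)\|_{\dot B^{3/2}_{2,1}}\lesssim C(\|a\|_{L^\infty})\|a\|_{\dot B^{3/2}_{2,1}}$ for smooth $\phi$ (see \cite{BCD}), the schematic structure of the remainders, namely smooth functions of $U$ multiplied by $\nabla^2 U$ or by $\nabla U\otimes\nabla U$, together with standard paraproduct decompositions and the interpolation $\nabla U\in L^2_T(\dot B^{3/2}_{2,1})$ (from $U\in L^\infty_T(\dot B^{3/2}_{2,1})\cap L^1_T(\dot B^{7/2}_{2,1})$), yields
$$\|F(U)\|_{L^1_T(\dot B^{3/2}_{2,1})}\le CM\,\|U\|^2_{E(T)}$$
whenever $\|U\|_{E(T)}$ is small enough to keep $\|a\|_{L^\infty}<1$. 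Define $\Phi:U\mapsto V$ by letting $V$ solve the linear system with initial data $(a_0,\widetilde\theta_0)$ and source $F(U)$. On the closed ball of radius $c$ in $E(T)$, the linear and nonlinear estimates combine to give $\|\Phi(U)\|_{E(T)}\le M\|U_0\|_{\dot B^{3/2}_{2,1}}+CMc^2\le c$ as soon as $\|U_0\|_{\dot B^{3/2}_{2,1}}\le c/(2M)$ and $c\le 1/(2CM)$; an analogous estimate on $\Phi(U_1)-\Phi(U_2)$ produces a contraction factor $\le 1/2$. Banach's theorem then yields a unique fixed point $U\in E(T)$ for every $T>0$, so the solution is global and \eqref{1.21} is automatic.

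\textbf{Main obstacle.} The principal technical point is the non-scalar, non-symmetric coupling $B$ in the linearized system: the usual scalar heat-kernel argument must be replaced by a frequency-localized diagonalization (or equivalent symmetrizer), with the spectral constants of the change of basis tracked carefully so as not to spoil the Besov maximal-regularity constant $M$. A secondary but necessary point is maintaining $\|a\|_{L^\infty}<1$ uniformly along the iteration so that the compositions through $1/(1+a)$ and the variable conductivity $\kappa_3(\rho,\theta)$ remain smooth; this follows from the smallness hypothesis through the embedding $\dot B^{3/2}_{2,1}\hookrightarrow L^\infty$.
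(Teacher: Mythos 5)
Your proposal follows the same overall strategy as the paper: introduce $a=1/\rho-1$ and $\widetilde\theta=\theta-1$, rewrite \eqref{idmodel} as a coupled $2\times 2$ parabolic system with quadratic remainders (you get exactly the linear part in \eqref{5.2}), prove a maximal-regularity bound for the linearized system in $\dot B^{3/2}_{2,1}$, and then close a Banach fixed-point argument in a small ball of $E(T)$, using the algebra and composition properties of $\dot B^{3/2}_{2,1}$ together with the interpolation $\nabla U\in L^2_T(\dot B^{3/2}_{2,1})$ to bound the nonlinearity quadratically. The one place you genuinely diverge is the proof of the linear estimate (the paper's Proposition~\ref{p5.2}): you propose to diagonalize the matrix $B$ at each dyadic block and run a heat-kernel argument in the diagonalizing basis, keeping track of the spectral conditioning of the change of basis. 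The paper instead performs a direct frequency-localized $L^2$ energy estimate and symmetrizes the coupling by taking the linear combination $\kappa_1(1+\delta)\times(\text{first eq.})+(\text{second eq.})$ with a small parameter $\delta$ satisfying $\delta\kappa_1^2<2\bar\kappa_3$, which makes the cross terms absorb into the dissipation and avoids any diagonalization or conditioning bookkeeping; it is effectively the discrete symmetrizer $D=\mathrm{diag}(\kappa_1(1+\delta),\kappa_2)$. Both routes use the same structural facts ($\operatorname{tr}B>0$, $\det B=\kappa_1\bar\kappa_3/\kappa_2>0$), and both deliver the bound \eqref{5.5}; your version is perhaps more conceptual, while the paper's is more elementary and sidesteps exactly the ``main obstacle'' you flag. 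The remaining steps — the quadratic nonlinear estimate, the contraction estimate on $\Phi(U_1)-\Phi(U_2)$, the global-in-$T$ conclusion — are as in the paper.
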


  The rest of the paper unfolds as follows. Section 2 will present some basic tools in Fourier analysis: Littlewood-Paley decomposition and paraproduct calculus in Besov spaces. Section 3 will prove the global existence and uniqueness result in Soblolev spaces (Theorem \ref{t1.2}). Section 4 will prove the global well-posedness result in the critical Besov space by using Banach's fixed point Theorem.
\section{Notation and preliminaries}\label{sec_pre}

For any $1\le p\le \infty$ and measurable $f:\mathbb R^n \to \mathbb R$,
we will use $\|f\|_{L^p(\mathbb R^n)}$, $\|f\|_{L^p}$ or simply
$\|f\|_p$ to denote the usual $L^p$ norm. For a vector valued function
$f=(f^1,\cdots, f^m)$, we still denote $\|f\|_p :=\sum\limits_{j=1}^m \| f^j \|_p$.

For any $0<T<\infty$ and any Banach space $\mathbb B$ with norm $\| \cdot \|_{\mathbb B}$,
we will use the notation $C([0,T], \,\mathbb B)$ or $C_t^0 \mathbb B$
to denote the space of continuous
$\mathbb B$-valued functions endowed with the norm
\begin{align*}
\| f \|_{C([0,T], \mathbb B)} :=\max_{0\le t \le T} \| f(t) \|_{\mathbb B}.
\end{align*}
Also for $1\le p\le \infty$, we define
\begin{align*}
\| f \|_{L_t^p \mathbb B([0,T])} := \|  \| f(t) \|_{\mathbb B} \|_{L_t^p([0,T])}.
\end{align*}

We shall adopt the following convention for the Fourier transform:
\begin{align*}
& \hat f(\xi) = \int_{\mathbb R^n} f(x) e^{-i x\cdot \xi} dx;\\
& f(x)= \frac 1 {(2\pi)^n} \int_{\mathbb R^n} \hat f (\xi) e^{i x \cdot \xi} d\xi.
\end{align*}
For $s\in \mathbb R$, the fractional Laplacian $|\nabla|^s $ then corresponds to the Fourier
multiplier $|\xi|^s$ defined as
\begin{align*}
\widehat{|\nabla|^s f}(\xi) = |\xi|^s \hat f (\xi),
\end{align*}
whenever it is well-defined. For $s\ge 0$, $1\le p<\infty$, we define the semi-norm and norms:
\begin{align*}
&\| f \|_{\dot W^{s,p}} = \| |\nabla|^s f \|_p, \\
& \| f \|_{W^{s,p}} = \| |\nabla|^s f \|_p + \| f \|_p.
\end{align*}
When $p=2$ we denote $\dot H^s =\dot W^{s,2}$ and $H^s= W^{s,2}$ in accordance with
the usual notation.

For any two quantities $X$ and $Y$, we denote $X \lesssim Y$ if
$X \le C Y$ for some constant $C>0$. Similarly $X \gtrsim Y$ if $X
\ge CY$ for some $C>0$. We denote $X \sim Y$ if $X\lesssim Y$ and $Y
\lesssim X$. The dependence of the constant $C$ on
other parameters or constants are usually clear from the context and
we will often suppress  this dependence. We shall denote
$X \lesssim_{Z_1, Z_2,\cdots,Z_k} Y$
if $X \le CY$ and the constant $C$ depends on the quantities $Z_1,\cdots, Z_k$.

For any two quantities $X$ and $Y$, we shall denote $X\ll Y$ if
$X \le c Y$ for some sufficiently small constant $c$. The smallness of the constant $c$ is
usually clear from the context. The notation $X\gg Y$ is similarly defined. Note that
our use of $\ll$ and $\gg$ here is \emph{different} from the usual Vinogradov notation
in number theory or asymptotic analysis.

 We will need to use the Littlewood--Paley (LP) frequency projection
operators. To fix the notation, let $\phi_0$ be a radial function in
$C_c^\infty(\mathbb{R}^n )$ and satisfy
\begin{equation}\nonumber
0 \leq \phi_0 \leq 1,\quad \phi_0(\xi) = 1\ {\text{ for}}\ |\xi| \leq
1,\quad \phi_0(\xi) = 0\ {\text{ for}}\ |\xi| \geq 7/6.
\end{equation}
Let $\phi(\xi):= \phi_0(\xi) - \phi_0(2\xi)$ which is supported in $\frac 12 \le |\xi| \le \frac 76$.
For any $f \in \mathcal S(\mathbb R^n)$, $j \in \mathbb Z$, define
\begin{align*}
 &\widehat{S_{j} f} (\xi) = \phi_0(2^{-j} \xi) \hat f(\xi), \\
 &\widehat{\Delta_j f} (\xi) = \phi(2^{-j} \xi) \hat f(\xi), \qquad \xi \in \mathbb R^n.
\end{align*}
We will denote $P_{>j} = I-S_{j}$ ($I$ is the identity operator)
and for any $-\infty<a<b<\infty$, denote
$P_{[a,b]}=\sum_{a\leq j\leq b}\Delta_j$.  Sometimes for simplicity of
notation (and when there is no obvious confusion) we will write $f_j = \Delta_j f$ and
$f_{a\le\cdot\le b} = \sum_{j=a}^b f_j$.
By using the support property of $\phi$, we have $\Delta_j \Delta_{j^{\prime}} =0$ whenever $|j-j^{\prime}|>1$.

Thanks to the above Littlewood-Paley decomposition, a number of functional spaces can be characterized. Let us give the definition of
homogeneous Besov spaces at first.
\begin{defn}\label{D2.1}
For $s\in\R$, $(p,r)\in[1,\infty]^2$, and
$u\in\mathcal{S}'(\R^3),$ we set
$$\|u\|_{\dot B_{p,r}^s(\R^3)}=\left(\sum_{j\in\mathbb Z}2^{jsr}\|\Delta_ju\|_{L^p}^r\right)^{\frac{1}{r}},$$
with the usual modification if $r=\infty$.
\end{defn}
We then define the Besov space by
$\dot B_{p,r}^s=\{u\in\mathcal{S}'(\R^3),\
\|u\|_{\dot B_{p,r}^s(\R^3)}<\infty\}$. In the following, for convenience of notations, we always use $\dot B_{p,r}^s$ instead of $\dot B_{p,r}^s(\R^3)$ and similar notations for other norms. Let us now state some classical
properties for the Besov spaces without proof.
\begin{prop}\label{p2.1}
The following properties hold:

1) Derivatives: we have
$$\|\nabla u\|_{\dot B_{p,r}^{s-1}}\leq C\|u\|_{\dot B_{p,r}^{s}}.$$

2) Sobolev embedding: If $p_1\leq p_2 $  and $r_1\leq r_2,$ then
$\dot B_{p_1,r_1}^{s}\hookrightarrow
\dot B_{p_2,r_2}^{s-\frac{3}{p_1}+\frac{3}{p_2}}$.

\ \ \ If  $s_1>s_2$ and $1\leq p,r_1, r_2\leq +\infty,$ then
$\dot B_{p,r_1}^{s_1}\hookrightarrow \dot B_{p,r_2}^{s_2}.$

3) Algebraic property: for $s>0$, $\dot B_{p,r}^{s}\cap L^\infty$ is an
algebra.

4) Real interpolation:
$\left(\dot B_{p,r}^{s_1},\dot B_{p,r}^{s_2}\right)_{\theta,r'}=\dot B_{p,r'}^{\theta
s_1+(1-\theta)s_2}.$

\end{prop}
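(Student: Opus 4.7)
The plan is to derive all four statements from three basic ingredients: Bernstein's inequality for frequency-localized functions, Bony's paraproduct decomposition, and the definition of the homogeneous Besov norm as a weighted $\ell^r$ norm of Littlewood--Paley blocks. None of the four assertions is genuinely deep, since the paper announces them as classical; the main burden is bookkeeping, in particular exploiting the support property $\Delta_j\Delta_{j'}=0$ for $|j-j'|>1$ that was noted just before the statement.

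For item (1), I would observe that $\widehat{\Delta_j\nabla u}(\xi)=i\xi\,\phi(2^{-j}\xi)\hat u(\xi)$ is supported in $\{|\xi|\sim 2^j\}$, so Bernstein's inequality gives $\|\Delta_j\nabla u\|_{L^p}\le C\,2^j\|\Delta_j u\|_{L^p}$. Multiplying by $2^{j(s-1)}$ and taking the $\ell^r$ norm in $j\in\Z$ yields the desired bound. For the first embedding in (2), the same Bernstein argument at the level of $L^p$ gives $\|\Delta_j f\|_{L^{p_2}}\lesssim 2^{3j(1/p_1-1/p_2)}\|\Delta_j f\|_{L^{p_1}}$ for $p_1\le p_2$, which after multiplication by $2^{j(s-3/p_1+3/p_2)}$ absorbs exactly the exponent shift, and one then uses the sequence inclusion $\ell^{r_1}\hookrightarrow\ell^{r_2}$ for $r_1\le r_2$. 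The second embedding in (2) is proved purely on the sequence side: when $s_1>s_2$, Hölder in $\ell^{r_1}$ together with a geometric tail controlled by $s_1-s_2>0$ gives $\ell^{r_1}(2^{js_1})\hookrightarrow\ell^{r_2}(2^{js_2})$.

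Item (3) is where actual work is required, and I would use Bony's decomposition $uv = T_u v + T_v u + R(u,v)$ with $T_u v := \sum_j S_{j-1}u\,\Delta_j v$ and $R(u,v) := \sum_{|j-j'|\le 1}\Delta_j u\,\Delta_{j'}v$. The two paraproducts $T_u v$ and $T_v u$ are immediately controlled in $\dot B^s_{p,r}$ by $\|u\|_{L^\infty}\|v\|_{\dot B^s_{p,r}}$ and $\|v\|_{L^\infty}\|u\|_{\dot B^s_{p,r}}$ respectively, since $\Delta_k(S_{j-1}u\,\Delta_j v)$ vanishes unless $|k-j|$ is bounded. The remainder $R(u,v)$ is the only piece whose estimate genuinely depends on the sign of $s$: after applying $\Delta_k$, one redistributes the weight $2^{ks}$ across the sum in $j$, which produces a convergent geometric series precisely when $s>0$. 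This is the main obstacle of the whole proposition, and it is exactly where the stated hypothesis $s>0$ is consumed.

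Finally, for item (4) I would invoke the retraction/coretraction technique. The map $u\mapsto(\Delta_j u)_{j\in\Z}$ is a coretraction from $\dot B^s_{p,r}$ into the weighted sequence space $\ell^r(\Z;2^{js}L^p)$, and the reconstruction $(f_j)\mapsto\sum_j \widetilde\Delta_j f_j$, with $\widetilde\Delta_j$ a slightly enlarged block operator, is a retraction. Since both maps are bounded on each endpoint, the abstract real interpolation theorem transports the classical identity $(\ell^r(2^{js_1}),\ell^r(2^{js_2}))_{\theta,r'}=\ell^{r'}(2^{j(\theta s_1+(1-\theta)s_2)})$ for weighted sequence spaces directly to the Besov scale. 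The only subtlety is verifying that the homogeneous blocks yield a well-defined coretraction modulo polynomials, which is routine once one works in $\mathcal{S}'/\mathcal{P}$ as in \cite{BCD}.
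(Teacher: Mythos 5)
The paper does not actually prove Proposition~\ref{p2.1}; it explicitly states these properties ``without proof'' as classical facts, so there is nothing to compare your argument against. Your outlines for part~(1), the first embedding in part~(2), part~(3), and part~(4) are correct and follow the standard textbook route (Bernstein for the derivative and Sobolev estimates, Bony decomposition with the remainder consuming $s>0$, and retraction/coretraction onto weighted sequence spaces for the interpolation identity).

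However, your treatment of the second embedding in part~(2) contains a genuine error, and it is worth noting that the statement itself, as printed in the paper, is actually false for \emph{homogeneous} Besov spaces. Your ``geometric tail controlled by $s_1-s_2>0$'' argument would write $2^{js_2}\|\Delta_j u\|_{L^p}=2^{j(s_2-s_1)}\cdot 2^{js_1}\|\Delta_j u\|_{L^p}$ and sum the factor $2^{j(s_2-s_1)}$, but in the homogeneous setting the index $j$ runs over all of $\Z$, and $2^{j(s_2-s_1)}\to\infty$ as $j\to-\infty$ precisely because $s_1>s_2$. The tail only converges when the sum is one-sided (the inhomogeneous case, $j\ge -1$), which is where the classical embedding $B^{s_1}_{p,r_1}\hookrightarrow B^{s_2}_{p,r_2}$ lives. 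One can also see the homogeneous version must fail from scaling: $\|u(\lambda\cdot)\|_{\dot B^s_{p,r}}=\lambda^{s-3/p}\|u\|_{\dot B^s_{p,r}}$, so two homogeneous Besov norms with the same $p$ but different $s$ scale differently and cannot control one another; a concrete counterexample is $\dot H^1(\RR^3)\not\hookrightarrow L^2(\RR^3)$. So this part of your argument would not close, and the underlying claim should be restricted to inhomogeneous spaces or to an intersection such as $\dot B^{s_1}_{p,r_1}\cap\dot B^{s_2}_{p,r_2}$.
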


We recall some product laws in Besov spaces coming directly
 from the paradifferential calculus of J. M. Bony (see\cite{Bo}).
\begin{prop}\label{p2.2}
We have the following product laws:
$$\|uv\|_{\dot B_{p,r}^s}\lesssim\|u\|_{L^\infty}\|v\|_{\dot B_{p,r}^s}+\|v\|_{L^\infty}
\|v\|_{\dot B_{p,r}^s}\ \ \hbox{if}\ \ s>0,$$
$$\|uv\|_{\dot B_{p,r}^{s_1}}\lesssim\|u\|_{\dot B_{p,r}^{s_1}}
\|v\|_{\dot B_{p,r}^{s_2}}\ \ \hbox{if}\ \ s_1\leq\frac{3}{2},
 s_2>\frac{3}{2}
\ \ \hbox{and}\ \ s_1+s_2>0,$$
$$\|uv\|_{\dot B_{p,r}^{s_1+s_2-\frac{3}{2}}}\lesssim\|u\|_{\dot B_{p,r}^{s_1}}
\|v\|_{\dot B_{p,r}^{s_2}}\ \ \hbox{if}\ \ s_1,s_2<\frac{3}{2}
\ \ \hbox{and}\ \ s_1+s_2>0, $$
$$\|uv\|_{\dot B_{p,r}^{s}}\lesssim\|u\|_{\dot B_{p,r}^{s}}
\|v\|_{\dot B_{p,r}^{3/2}\cap L^\infty}\ \ \hbox{if}\ \
|s|<\frac{3}{2}.$$ Moreover, if $r=1$, the third inequality also holds for
$s_1,s_2\leq\frac{3}{2}$ and $s_1+s_2>0.$
\end{prop}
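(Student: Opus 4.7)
The plan is to prove all four inequalities via Bony's paraproduct decomposition
\[
uv = T_u v + T_v u + R(u,v),\qquad T_u v := \sum_{j\in\Z} S_{j-1}u\,\Delta_j v,\quad R(u,v) := \sum_{j\in\Z}\sum_{|j-j'|\le 1}\Delta_j u\,\Delta_{j'} v,
\]
together with two building-block estimates. The first is the basic paraproduct bound
\[
\|T_u v\|_{\dot B^{s}_{p,r}} \lesssim \|u\|_{L^\infty}\|v\|_{\dot B^{s}_{p,r}}\quad\text{for every }s\in\R,
\]
which I would obtain by noting that $\Delta_k(S_{j-1}u\,\Delta_j v)$ vanishes unless $|k-j|\le N$ (for some absolute $N$), applying $\|S_{j-1}u\|_\infty\le\|u\|_\infty$, and summing in $\ell^r(2^{ks})$ by Young's inequality. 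The second is the remainder bound
\[
\|R(u,v)\|_{\dot B^{s_1+s_2-3/p}_{p,r}}\lesssim \|u\|_{\dot B^{s_1}_{p,r}}\|v\|_{\dot B^{s_2}_{p,r}}\quad\text{whenever } s_1+s_2>0,
\]
proved by observing that $\mathrm{supp}\,\widehat{\Delta_j u\,\tilde\Delta_j v}\subset B(0,C2^j)$, applying Bernstein's inequality to cash in a factor $2^{3j/p}$ for Hölder's $L^{p/2}\hookrightarrow L^p$, and exploiting $s_1+s_2>0$ to sum the geometric series $\sum_{j\ge k-N}2^{(k-j)(s_1+s_2)}$. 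When $r=1$ the geometric series is replaced by a mere sup, and the bound extends to the endpoint $s_1+s_2=0$ is excluded but the argument still works for $s_1,s_2\le 3/2$.

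Once these two ingredients are in place, each of the four stated inequalities is a matter of bookkeeping. For the first ($s>0$), I would bound $T_u v$ and $T_v u$ directly by the basic paraproduct estimate, and for the remainder I would use $\|R(u,v)\|_{\dot B^s_{p,r}}\lesssim \|v\|_\infty\|u\|_{\dot B^s_{p,r}}$ (provable by the same annulus-summing argument as the paraproduct, since $s>0$ lets me absorb the geometric decay). For the second ($s_1\le 3/2 < s_2$), the embedding $\dot B^{s_2}_{p,r}\hookrightarrow L^\infty$ reduces $T_v u$ to the basic bound, $T_u v$ is estimated by a shifted paraproduct estimate with target index $s_1$, and the remainder uses $s_1+s_2-3/p\ge s_1$ combined with the Besov embedding $\dot B^{s_1+s_2-3/p}_{p,r}\hookrightarrow \dot B^{s_1}_{p,r}$. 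The third inequality ($s_1,s_2<3/2$) is the direct statement of the remainder bound together with paraproduct estimates $\|T_u v\|_{\dot B^{s_1+s_2-3/p}_{p,r}}\lesssim \|u\|_{\dot B^{s_2-3/p}_{\infty,\infty}}\|v\|_{\dot B^{s_1}_{p,r}}$ (and symmetrically), using the embedding $\dot B^{s_2}_{p,r}\hookrightarrow \dot B^{s_2-3/p}_{\infty,\infty}$. The fourth ($|s|<3/2$) is the symmetric form: $T_v u$ and $R(u,v)$ are handled by $\|v\|_\infty$ and $\|v\|_{\dot B^{3/2}_{p,r}}$ respectively, while $T_u v$ is handled by the shifted paraproduct estimate using $v\in\dot B^{3/2}_{p,r}$ (so $\dot B^{-3/2}_{\infty,\infty}$ regularity transfer loses exactly $3/2$ to land back at $s$).

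The main obstacle will be the endpoint management for the remainder, where $s_1+s_2>0$ is needed strictly for $r\in(1,\infty]$ to sum the geometric tail, and the extension to $s_1,s_2\le 3/2$ in the $r=1$ case requires observing that at $r=1$ the summation $\sum_{j\ge k-N}$ controls $\sup_{j\ge k-N}$ of a summable sequence, eliminating the need for strict inequalities on $s_1$ and $s_2$ individually. A secondary technicality is verifying that the range condition $|s|<3/2$ in the last inequality is exactly what is needed to keep both the paraproduct exponent shift and the remainder index inside the Besov scale; this is where one invokes $\dot B^{3/2}_{p,r}\cap L^\infty$ as the natural multiplier algebra. All remaining work is Young-type summation in $\ell^r$, which is routine once the spectral localization has been set up.
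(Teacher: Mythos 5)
The paper does not prove Proposition \ref{p2.2} at all --- it records the statements and cites Bony's paradifferential calculus (\cite{Bo}, and implicitly \cite{BCD}), so there is no ``paper's own proof'' to compare against. Your approach through the Bony decomposition $uv = T_u v + T_v u + R(u,v)$, together with the two building-block estimates (the $L^\infty$ paraproduct bound and the remainder bound), is exactly the standard route that those references use; the treatment of inequalities one, three, and four, and of the $r=1$ endpoint, is sound in outline.

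However, the argument you sketch for the second inequality contains a genuine gap. You propose to control the remainder by landing in $\dot B^{s_1+s_2-3/p}_{p,r}$ and then invoking ``the Besov embedding $\dot B^{s_1+s_2-3/p}_{p,r}\hookrightarrow \dot B^{s_1}_{p,r}$'' because $s_2\geq 3/p$ makes $s_1+s_2-3/p\geq s_1$. But this embedding is \emph{false} in the homogeneous scale: $\dot B^{a}_{p,r}\hookrightarrow\dot B^{b}_{p,r}$ for $a>b$ fails because the dyadic weights $2^{ja}$ and $2^{jb}$ are incomparable as $j\to-\infty$ (such an embedding holds only for the inhomogeneous spaces, where low frequencies are truncated). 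The same low-frequency obstruction undermines the ``shifted paraproduct estimate with target index $s_1$'' for $T_u v$: writing $\|T_u v\|_{\dot B^{s_1}_{p,r}}\lesssim\|u\|_{\dot B^{s_1-s_2}_{\infty,\infty}}\|v\|_{\dot B^{s_2}_{p,r}}$ and then trying to dominate $\|u\|_{\dot B^{s_1-s_2}_{\infty,\infty}}$ by $\|u\|_{\dot B^{s_1}_{p,r}}$ requires, after the Sobolev embedding $\dot B^{s_1}_{p,r}\hookrightarrow\dot B^{s_1-3/p}_{\infty,\infty}$, the drop from regularity $s_1-3/p$ to $s_1-s_2<s_1-3/p$, which again is not available in the homogeneous setting. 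In fact the second inequality as literally stated is suspect for that same reason: taking $p=2$, $s_1=0$, $s_2=2$, and $u,v$ frequency-localized near $|\xi|\sim 2^{-M}$, one checks $\|uv\|_{\dot B^0_{2,1}}\sim 2^{-9M/2}$ while $\|u\|_{\dot B^0_{2,1}}\|v\|_{\dot B^2_{2,1}}\sim 2^{-5M}$, which violates the inequality as $M\to\infty$. So what is missing is not a cosmetic step but the awareness that homogeneous Besov regularity is \emph{not} monotone, and an honest treatment of $T_u v$ at low frequencies.
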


\section{Global well-posedness in Sobolev spaces}
The present section is dedicated to proving Theorem \ref{t1.2}. Before starting,  we
assume that $\widetilde\rho:=\rho-\underline\rho,\widetilde\theta:=\theta-\underline\theta$. Then  we first rewrite (\ref{idmodel}) as
\begin{equation}\label{4.3}
	\left \{
	\begin{aligned}
		&\partial_t\widetilde\rho-\kappa_1\bar\theta\Delta \widetilde \rho-\kappa_1\bar\rho\Delta \widetilde \theta=\kappa_1\Delta(\widetilde\rho\widetilde\theta),\\
		&\rho\kappa_2\partial_t\widetilde\theta-\kappa_1\kappa_2\nabla\widetilde\theta
		\cdot\nabla(\rho\theta)-\kappa_1^2\nabla\cdot(\theta\nabla(\rho\theta))=\nabla\cdot\left(\kappa_3(\theta)\nabla\widetilde\theta\right).\\
	\end{aligned} \right.
\end{equation}
For simplicity, here we assume that $\underline\rho=\underline\theta=1.$ And furthermore, we decompose the coefficients $\kappa_3(\theta)=\bar\kappa_3+\widetilde\kappa_3(\widetilde\theta)$, which satisfies $\widetilde\kappa_3(0)=0$. We also assume that $\widetilde\kappa_3'$ and
$\widetilde\kappa_3''$ exist and are bounded. Then  (\ref{4.3}) can be written by
\begin{equation}\label{4.4}
	\left \{
	\begin{aligned}
		&\partial_t\widetilde\rho-\kappa_1\Delta \widetilde \rho-\kappa_1\Delta \widetilde \theta=\kappa_1\Delta(\widetilde\rho\widetilde\theta),\\
		&\kappa_2\partial_t\widetilde\theta-(\kappa_1^2+\bar\kappa_3)\Delta\widetilde\theta-\kappa_1^2\Delta \widetilde \rho=\kappa_1(\kappa_1+\kappa_2)\Big(\nabla\widetilde\theta
		\cdot\nabla\widetilde\rho +\nabla\widetilde\theta
		\cdot\nabla\widetilde\theta\\
		&\quad\quad\quad\quad\quad\quad\quad\quad+\nabla\widetilde\theta
		\cdot\nabla(\widetilde\theta\widetilde\rho)\Big)+\kappa_1^2\Delta(\widetilde\rho\widetilde\theta)+\nabla\cdot(\widetilde\kappa_3(\widetilde\theta)\nabla\widetilde\theta)-\kappa_2\widetilde\rho\partial_t\widetilde\theta.\\
	\end{aligned} \right.
\end{equation}

\subsection{$L^2$ energy estimate}
Taking the $L^2$ inner product with $\widetilde\rho$ and $\widetilde \theta$ with respect to the first and second equations, one has
\begin{align}
	\begin{split}\label{4.5}
	\frac{1}{2}\frac{d}{dt}\|\widetilde\rho\|_{L^2}^2+\kappa_1\|\nabla \widetilde \rho\|^2_{L^2}&=-\kappa_1\int_{\RR^3}\nabla \widetilde\theta\cdot \nabla\widetilde\rho\,dx-\kappa_1\int_{\RR^3}\nabla (\widetilde\rho\widetilde\theta)\cdot \nabla\widetilde\rho\,dx\\
	&:=I_1+I_2.	
	\end{split}
\end{align}
and
\begin{align}
	\begin{split}\label{4.6}
		\frac{1}{2}\kappa_2&\frac{d}{dt}\|\widetilde\theta\|_{L^2}^2+(\kappa_1^2+\bar\kappa_3)\|\nabla \widetilde \theta\|^2_{L^2}\\
		&=-\kappa_1^2\int_{\RR^3}\nabla \widetilde\theta\cdot \nabla\widetilde\rho\,dx-\kappa_1^2\int_{\RR^3}\nabla (\widetilde\rho\widetilde\theta)\cdot \nabla\widetilde\theta\,dx\\
		&\quad +\kappa_1(\kappa_1+\kappa_2)\int_{\RR^3}\Big(\nabla \widetilde\rho\cdot\nabla\widetilde\theta+\nabla \widetilde\theta\cdot\nabla\widetilde\theta+\nabla\widetilde\theta\cdot\nabla(\widetilde\theta\widetilde\rho)\Big)\,\widetilde\theta\,dx\\
		&\quad-\int_{\RR^3}\widetilde\kappa_3(\widetilde\theta)\nabla\widetilde\theta\cdot\nabla\widetilde\theta\,dx+\kappa_2\int_{\RR^3}\widetilde\rho\partial_t \widetilde\theta \ \widetilde\theta \,dx\\
		&:=I_3+I_4+I_5+I_6+I_7,
	\end{split}
\end{align}
where
\begin{align*}
	&I_1=-\kappa_1\int_{\RR^3}\nabla \widetilde\theta\cdot \nabla\widetilde\rho\,dx,\quad I_2=-\kappa_1\int_{\RR^3}\nabla (\widetilde\rho\widetilde\theta)\cdot \nabla\widetilde\rho\,dx,\\
	& I_3=-\kappa_1^2\int_{\RR^3}\nabla \widetilde\theta\cdot \nabla\widetilde\rho\,dx,\quad I_4=-\kappa_1^2\int_{\RR^3}\nabla (\widetilde\rho\widetilde\theta)\cdot \nabla\widetilde\theta\,dx,\\
	&I_5=\kappa_1(\kappa_1+\kappa_2)\int_{\RR^3}\Big(\nabla \widetilde\rho\cdot\nabla\widetilde\theta+\nabla \widetilde\theta\cdot\nabla\widetilde\theta+\nabla\widetilde\theta\cdot\nabla(\widetilde\theta\widetilde\rho)\Big)\,\widetilde\theta\,dx,\\
	&I_6=-\int_{\RR^3}\widetilde\kappa_3(\widetilde\theta)\nabla\widetilde\theta\cdot\nabla\widetilde\theta\,dx,\quad I_7=\kappa_2\int_{\RR^3}\widetilde\rho\partial_t\widetilde\theta \ \widetilde\theta \,dx.
\end{align*}
Firstly, by H\"{o}lder and Cauchy inequalities, we have
$$I_1\leq \frac{1}{2}\kappa_1\|\nabla\widetilde\theta\|_{L^2}^2+\frac{1}{2}\kappa_1\|\nabla\widetilde\rho\|_{L^2}^2,\quad I_3\leq \frac{1}{2}\kappa^2_1\|\nabla\widetilde\theta\|_{L^2}^2+\frac{1}{2}\kappa^2_1\|\nabla\widetilde\rho\|_{L^2}^2.$$
Then by linear combination of (\ref{4.5}) and (\ref{4.6}), one can get that
\begin{align}
	\begin{split}\label{4.7}
		\frac{1}{2}\kappa_1(1+\delta)\frac{d}{dt}\|\widetilde\rho\|_{L^2}^2
		&+\frac{1}{2}\kappa_2\frac{d}{dt}\|\widetilde\theta\|_{L^2}^2+\frac12\delta\kappa_1^2\|\nabla \widetilde \rho\|^2_{L^2}+(\bar\kappa_3-\frac{1}{2}\delta\kappa_1^2)\|\nabla \widetilde \theta\|^2_{L^2}\\
		&\leq \kappa_1(1+\delta)I_2+I_4+I_5+I_6+I_7,
	\end{split}
\end{align}
where $\delta$ is a small  positive constant that satisfies
$$\delta\kappa_1^2<2\bar\kappa_3.$$
To bound $I_2$, we decompose it into two parts  and by H\"{o}lder inequality, we have
\begin{align}
	\begin{split}\label{4.8}
	I_2&=-\kappa_1\int_{\RR^3}\widetilde\theta \nabla \widetilde\rho \cdot \nabla\widetilde\rho\,dx-\kappa_1\int_{\RR^3}\widetilde\rho\nabla \widetilde\theta\cdot \nabla\widetilde\rho\,dx\\
	&\leq \kappa_1\|\widetilde\theta\|_{L^\infty}\|\nabla\widetilde\rho\|_{L^2}^2+\kappa_1\|\widetilde\rho \|_{L^\infty}\|\nabla\widetilde\rho\|_{L^2}\|\nabla\widetilde\theta\|_{L^2}.
	\end{split}
\end{align}
Similarly, $I_4$ and $I_5$ can be bounded by
\begin{align}
	\begin{split}\label{4.9}
		&I_4\leq \kappa^2_1\|\widetilde\rho\|_{L^\infty}\|\nabla\widetilde\theta\|_{L^2}^2+\kappa_1^2\|\widetilde\theta \|_{L^\infty}\|\nabla\widetilde\rho\|_{L^2}\|\nabla\widetilde\theta\|_{L^2},\\
		&I_5\leq \kappa_1(\kappa_1+\kappa_2)\Big(\|\widetilde\rho\|_{L^\infty}+\|\widetilde\theta\|_{L^\infty}+\|\widetilde\rho\|_{L^\infty}\|\widetilde\theta\|_{L^\infty}+\|\widetilde\theta\|^2_{L^\infty}\Big)\\
		&\quad\quad\times\Big(\|\nabla\widetilde\rho\|_{L^2}\|\nabla\widetilde\theta\|_{L^2}+\|\nabla\widetilde\theta\|_{L^2}^2\Big).
	\end{split}
\end{align}
For $I_6$,  notice that $\widetilde\kappa_3(0)=0$. Then we use Taylor formula and   H\"{o}lder inequality to get that
\begin{align}
	\begin{split}\label{4.10}
		I_6\leq C \|\widetilde\theta\|_{L^\infty}\|\nabla\widetilde\theta\|_{L^2}^2.
	\end{split}
\end{align}
Now we turn to the last term $I_7$. The H\"{o}lder inequality implies that
\begin{align}
	\begin{split}\label{4.11}
		I_7&=\kappa_2\int_{\RR^3}\widetilde\rho\ \partial_t\widetilde\theta\ \widetilde\theta  \,dx\leq \kappa_2 \|\widetilde\rho\|_{L^3}\|\partial_t\widetilde\theta\|_{L^2}\|\widetilde\theta\|_{L^6}\\
		&\leq C\|\rho\|_{H^1}\big(\|\partial_t\widetilde\theta\|_{L^2}^2+\|\nabla \widetilde\theta\|_{L^2}^2\big).
	\end{split}
\end{align}
Combining the estimates (\ref{4.8}) to (\ref{4.11}) with (\ref{4.7}), we have
\begin{align}
	\begin{split}\label{4.12}
		\frac{1}{2}\kappa_1(1+\delta)&\frac{d}{dt}\|\widetilde\rho\|_{L^2}^2
		+\frac{1}{2}\kappa_2\frac{d}{dt}\|\widetilde\theta\|_{L^2}^2+\frac{1}{2}\delta\kappa_1^2\|\nabla \widetilde \rho\|^2_{L^2}+(\bar\kappa_3-\frac{1}{2}\delta\kappa_1^2)\|\nabla \widetilde \theta\|^2_{L^2}\\
		&\leq C(\kappa_1^2+\kappa_2^2+1+\frac{\kappa_2}{\kappa_1})\Big(\|\widetilde\rho\|_{H^2}+\|\widetilde\theta\|_{H^2}+\|\widetilde\rho\|^2_{H^2}+\|\widetilde\theta\|^2_{H^2}\Big)\\
		&\quad\quad\times\Big(\|\partial_t\widetilde\theta\|_{L^2}^2+\|\nabla\widetilde\theta\|^2_{L^2}+\|\nabla\widetilde\rho\|_{H^2}^2\Big).
	\end{split}
\end{align}
\subsection{$\dot H^2$ energy estimate}
Due to the equivalence of $\|(\widetilde\rho,\widetilde\theta)\|_{H^2}$ with $\|(\widetilde\rho,\widetilde\theta)\|_{L^2}+\|(\widetilde\rho,\widetilde\theta)\|_{\dot H^2}$, it is sufficient to bound the homogeneous $\dot H^2$ norm of $(\widetilde\rho,\widetilde\theta)$.  Applying $\partial_i^2$ for $i=1,2,3$ to (
\ref{4.4})  and then taking the $L^2 $ inner product with $(\partial_i^2\widetilde\rho,\partial_i^2\widetilde\theta)$, respectively, we find that
\begin{align}
	\begin{split}\label{4.13}
		&\frac{1}{2}\frac{d}{dt}\|\partial_i^2\widetilde\rho\|_{L^2}^2+\kappa_1\|\nabla \widetilde \partial_i^2\rho\|^2_{L^2}\\
		&=-\kappa_1\int_{\RR^3}\nabla\partial_i^2 \widetilde\theta\cdot \nabla\partial_i^2\widetilde\rho\,dx-\kappa_1\int_{\RR^3}\nabla\partial_i^2 (\widetilde\rho\widetilde\theta)\cdot \nabla\partial_i^2\widetilde\rho\,dx\\
		&\leq \frac{1}{2}\kappa_1\|\nabla\partial_i^2\widetilde\theta\|_{L^2}^2+\frac{1}{2}\kappa_1\|\nabla\partial_i^2\widetilde\rho\|_{L^2}^2-\kappa_1\int_{\RR^3}\nabla\partial_i^2 (\widetilde\rho\widetilde\theta)\cdot \nabla\partial_i^2\widetilde\rho\,dx,
	\end{split}
\end{align}
and
\begin{align}
	\begin{split}\label{4.14}
		\frac{1}{2}\kappa_2&\frac{d}{dt}\|\partial_i^2\widetilde\theta\|_{L^2}^2+(\kappa_1^2+\bar\kappa_3)\|\nabla \partial_i^2\widetilde \theta\|^2_{L^2}\\
		&=-k_1^2\int_{\RR^3}\nabla \partial_i^2\widetilde\theta\cdot \partial_i^2\nabla\widetilde\rho\,dx-k_1^2\int_{\RR^3}\nabla\partial_i^2 (\widetilde\rho\widetilde\theta)\cdot \nabla\partial_i^2\widetilde\theta\,dx\\
		&\quad +\kappa_1(\kappa_1+\kappa_2)\int_{\RR^3}\partial_i^2\Big(\nabla \widetilde\rho\cdot\nabla\widetilde\theta+\nabla \widetilde\theta\cdot\nabla\widetilde\theta+\nabla\widetilde\theta\cdot\nabla(\widetilde\theta\widetilde\rho)\Big)\,\partial_i^2\widetilde\theta\,dx\\
		&\quad-\int_{\RR^3}\partial_i^2(\widetilde\kappa_3(\widetilde\theta)\nabla\widetilde\theta)\cdot\nabla\partial_i^2\widetilde\theta\,dx+\kappa_2\int_{\RR^3}\partial_i^2(\widetilde\rho\partial_t\widetilde\theta) \ \partial_i^2\widetilde\theta \,dx\\
		&\leq \frac{1}{2}\kappa^2_1\|\nabla\partial_i^2\widetilde\theta\|_{L^2}^2+\frac{1}{2}\kappa^2_1\|\nabla\partial_i^2\widetilde\rho\|_{L^2}^2-k_1^2\int_{\RR^3}\nabla\partial_i^2 (\widetilde\rho\widetilde\theta)\cdot \nabla\partial_i^2\widetilde\theta\,dx\\
		&\quad +\kappa_1(\kappa_1+\kappa_2)\int_{\RR^3}\partial_i^2\Big(\nabla \widetilde\rho\cdot\nabla\widetilde\theta+\nabla \widetilde\theta\cdot\nabla\widetilde\theta+\nabla\widetilde\theta\cdot\nabla(\widetilde\theta\widetilde\rho)\Big)\,\partial_i^2\widetilde\theta\,dx\\
		&\quad-\int_{\RR^3}\partial_i^2(\widetilde\kappa_3(\widetilde\theta)\nabla\widetilde\theta)\cdot\nabla\partial_i^2\widetilde\theta\,dx+\kappa_2\int_{\RR^3}\partial_i^2(\widetilde\rho\partial_t\widetilde\theta) \ \partial_i^2\widetilde\theta \,dx.
	\end{split}
\end{align}
Again, we denote $J_1$ to $J_5$ by
\begin{align*}
	&J_1=-\kappa_1\int_{\RR^3}\nabla\partial_i^2 (\widetilde\rho\widetilde\theta)\cdot \nabla\partial_i^2\widetilde\rho\,dx, \quad J_2=-k_1^2\int_{\RR^3}\nabla \partial_i^2(\widetilde\rho\widetilde\theta)\cdot \nabla\partial_i^2\widetilde\theta\,dx,\\
	&J_3=\kappa_1(\kappa_1+\kappa_2)\int_{\RR^3}\partial_i^2\Big(\nabla \widetilde\rho\cdot\nabla\widetilde\theta+\nabla \widetilde\theta\cdot\nabla\widetilde\theta+\nabla\widetilde\theta\cdot\nabla(\widetilde\theta\widetilde\rho)\Big)\,\partial_i^2\widetilde\theta\,dx,\\
	&J_4=-\int_{\RR^3}\partial_i^2(\widetilde\kappa_3(\widetilde\theta)\nabla\widetilde\theta)\cdot\nabla\partial_i^2\widetilde\theta\,dx,\quad J_5=\kappa_2\int_{\RR^3}\partial_i^2(\widetilde\rho\partial_t\widetilde\theta) \ \partial_i^2\widetilde\theta \,dx.
\end{align*}
Then the linear combination of (\ref{4.13}) and (\ref{4.14}) implies that
\begin{align}
	\begin{split}\label{4.15}
		\frac{1}{2}\kappa_1(1+\delta)\frac{d}{dt}\|\partial_i^2\widetilde\rho\|_{L^2}^2
		&+\frac{1}{2}\kappa_2\frac{d}{dt}\|\partial_i^2\widetilde\theta\|_{L^2}^2\\
		&+\frac{1}{2}\delta\kappa_1^2\|\nabla\partial_i^2 \widetilde \rho\|^2_{L^2}+(\bar\kappa_3-\frac{1}{2}\delta\kappa_1^2)\|\nabla\partial_i^2 \widetilde \theta\|^2_{L^2}\\
		&\leq \kappa_1(1+\delta)J_1+J_2+J_3+J_4+J_5.
	\end{split}
\end{align}
By the H\"{o}lder inequality, one has
\begin{align}
	\begin{split}\label{4.16}
		J_1&\leq C\kappa_1\Big(\|\widetilde\theta\|_{L^\infty}\|\nabla\partial_i^2\widetilde\rho\|_{L^2}^2+\|\nabla\partial_i\widetilde\rho\|_{L^6}\|\partial_i\widetilde\theta\|_{L^3}\|\nabla\partial_i^2\widetilde\rho\|_{L^2}\\
		&\quad+\|\nabla\widetilde\rho\|_{L^3}\|\partial_i^2\widetilde\theta\|_{L^6}\|\nabla\partial_i^2\widetilde\rho\|_{L^2}+\|\widetilde\rho\|_{L^\infty}\|\nabla\partial_i^2\widetilde\rho\|_{L^2}\|\nabla\partial_i^2\widetilde\theta\|_{L^2}\\
		&\quad+\|\nabla\partial_i\widetilde\theta\|_{L^6}\|\partial_i\widetilde\rho\|_{L^3}\|\nabla\partial_i^2\widetilde\rho\|_{L^2}+\|\nabla\widetilde\theta\|_{L^3}\|\partial_i^2\widetilde\rho\|_{L^6}\|\nabla\partial_i^2\widetilde\rho\|_{L^2}\Big)\\
		&\leq C\kappa_1\Big(\|\widetilde\theta\|_{H^2}\|\nabla\partial_i^2\widetilde\rho\|_{L^2}^2+\|\widetilde\rho\|_{H^2}\|\nabla\partial_i^2\widetilde\rho\|_{L^2}\|\nabla\partial_i^2\widetilde\theta\|_{L^2}\Big).
		\end{split}
\end{align}
Similarly, we have
\begin{align}
	\begin{split}\label{4.17}
		J_2\leq  C\kappa_1^2\Big(\|\widetilde\theta\|_{H^2}\|\nabla\partial_i^2\widetilde\rho\|_{L^2}\|\nabla\partial_i^2\widetilde\theta\|_{L^2}+\|\widetilde\rho\|_{H^2}\|\nabla\partial_i^2\widetilde\theta\|_{L^2}^2\Big).
	\end{split}
\end{align}
and
\begin{align}
	\begin{split}\label{4.18}
	J_3&\leq C(\kappa_1^2+\kappa_2^2)\|\nabla\partial_i^2\widetilde\theta\|_{L^2}\Big(\|\nabla\widetilde\theta\|_{L^3}\|\nabla\partial_i\widetilde\rho\|_{L^6}+\|\nabla\widetilde\rho\|_{L^3}\|\nabla\partial_i\widetilde\theta\|_{L^6}\\
	&\quad+\|\nabla\widetilde\theta\|_{L^3}\|\nabla\partial_i\widetilde\theta\|_{L^6}+\|\widetilde\rho\|_{L^\infty}\|\nabla\widetilde\theta\|_{L^3}\|\nabla\partial_i\widetilde\theta\|_{L^6}\\
	&\quad+\|\widetilde\theta\|_{L^\infty}\|\nabla\widetilde\rho\|_{L^3}\|\nabla\partial_i\widetilde\theta\|_{L^6}+\|\widetilde\theta\|_{L^\infty}\|\nabla\widetilde\theta\|_{L^3}\|\nabla\partial_i\widetilde\rho\|_{L^6}\\
	&\quad+{\|\nabla\widetilde\rho\|_{L^\infty}\|\nabla\widetilde\theta\|_{L^3}\|\nabla\widetilde\theta\|_{L^6}}\Big)\\
	&\leq C(\kappa_1^2+\kappa_2^2)\|\nabla\partial_i^2\widetilde\theta\|_{L^2}\Big(\|\widetilde\theta\|_{H^2}+\|\widetilde\rho\|_{H^2}+\|\widetilde\theta\|^2_{H^2}+\|\widetilde\rho\|^2_{H^2}\Big)\\
	&\quad\times\Big(\|\nabla\partial_i^2\widetilde\rho\|_{L^2}
+\|\nabla\partial_i^2\widetilde\theta\|_{L^2}+\|\nabla^2\widetilde\theta\|_{L^2}\Big),
\end{split}
\end{align}
where we have used the Sobolev embedding that for any distribution $f$,
$$\|f\|_{L^\infty}\leq C\|f\|_{H^2},\quad \|f\|_{L^3}\leq C\|\nabla f\|_{H^1}.$$
Similarly, for $J_4$, we have
\begin{align}
	\begin{split}\label{4.19}
		J_4&\leq C \|\widetilde\theta\|_{L^\infty}\|\nabla\partial_i^2\widetilde\theta\|_{L^2}^2+\|\partial_i\widetilde\theta\|_{L^3}\|\nabla\partial_i\widetilde\theta\|_{L^6}^2\|\nabla\partial_i^2\widetilde\theta\|_{L^2}\\
		&\quad+\|\partial_i\widetilde\theta\|_{L^\infty}\|\partial_i\widetilde\theta\|_{L^3}^2\|\nabla\partial_i^2\widetilde\theta\|_{L^2}\\
		&\leq C\|\widetilde\theta\|_{H^2} \|\nabla\partial_i^2\widetilde\theta\|_{L^2}^2+C\|\widetilde\theta\|_{H^2}\|\nabla\partial_i\widetilde\theta\|_{L^2}\|\nabla\partial_i^2\widetilde\theta\|_{L^2}.
	\end{split}
\end{align}
As to the last term $J_5$, we decompose it into four parts
\begin{align*}
	\begin{split}
		J_5&=\kappa_2\int_{\RR^3}\partial_i^2(\widetilde\rho\partial_t\widetilde\theta) \ \partial_i^2\widetilde\theta \,dx\\
		&=\kappa_2\int_{\RR^3}\partial_i^2\widetilde\rho\ \partial_t\widetilde\theta \ \partial_i^2\widetilde\theta \,dx+2\kappa_2\int_{\RR^3}\partial_i\widetilde\rho
	\ \partial_t\partial_i\widetilde\theta \ \partial_i^2\widetilde\theta \,dx\\
	&\quad-\frac{1}{2}\kappa_2\int_{\RR^3}\partial_t\widetilde\rho\ (\partial_i^2\widetilde\theta)^2 \,dx+\frac{1}{2}\kappa_2\int_{\RR^3}\partial_t\big(\widetilde\rho\ (\partial_i^2\widetilde\theta)^2 \big)\,dx\\
	&:=J_{51}+J_{52}+J_{53}+J_{54}.
	\end{split}
\end{align*}
For $J_{53}$,  we can quickly by the equation of $\widetilde\rho$ get that
\begin{align}
	\begin{split}\label{4.20}
		J_{53}&=-\frac{1}{2}\kappa_2\int_{\RR^3}\partial_t\widetilde\rho\ (\partial_i^2\widetilde\theta)^2  \,dx\\
		&=- \frac{1}{2} \kappa_1 \kappa_2\int_{\RR^3}\Big(\Delta \widetilde\rho+\Delta \widetilde\theta+\Delta (\widetilde\rho\widetilde\theta)\Big)\ (\partial_i^2\widetilde\theta)^2 \,dx\\
		&\leq C\kappa_1 \kappa_2 \Big(\|\widetilde\rho\|_{H^2}+\|\widetilde\theta\|_{H^2}+\|\widetilde\rho\|^2_{H^2}+\|\widetilde\theta\|^2_{H^2}\Big)\Big(\|\nabla\widetilde\theta\|^2_{H^2}+\|\nabla\widetilde\rho\|_{H^2}^2\Big).
	\end{split}
\end{align}
To bound $J_{51}$ and $J_{52}$,
we roughly estimate them as follows,
\begin{align}
	\begin{split}\label{4.20a}
		J_{51}+J_{52}\leq C(\kappa_1,\kappa_2)\Big(\|\partial_i^2\widetilde\rho\ \partial_i^2\widetilde\theta \|_{L^2}^2+\|\partial_i\widetilde\rho\ \partial_i^2\widetilde\theta \|_{L^2}^2\Big)+\frac{\kappa_2^2}{16C\kappa_1^2}\delta\|\partial_t\partial_i^k\widetilde\theta\|_{L^2}^2.
	\end{split}
\end{align}
 Let us postpone the estimate of $J_{54}$, since its bound can be deduced directly by taking time integral. We need to get the additional estimates on $\partial_t\widetilde\theta$ and  $\partial_t\partial_i\widetilde\theta$. For that, applying $\partial_i^k$ with $k=0,1,i=1,2,3$ to the equation of  $\widetilde\theta$, taking $L^2$ inner product with $\partial_t\partial_i^k\widetilde\theta$, one has
\begin{align}
	\begin{split}\label{4.21}
		&\kappa_2\|\partial_t\partial_i^k\widetilde\theta\|_{L^2}^2+\frac{1}{2}(\kappa_1^2+\bar\kappa_3)\frac{d}{dt}\|\nabla \partial_i^k\widetilde \theta\|^2_{L^2}\\
		&=k_1^2\int_{\RR^3} \Delta\partial_i^k\widetilde\rho\cdot\partial_t\partial_i^k\widetilde\theta\,dx+k_1^2\int_{\RR^3}\Delta\partial_i^k(\widetilde\rho\widetilde\theta)\cdot \partial_t\partial_i^k\widetilde\theta\,dx\\
		&\quad +\kappa_1(\kappa_1+\kappa_2)\int_{\RR^3}\partial_i^k\Big(\nabla \widetilde\rho\cdot\nabla\widetilde\theta+\nabla \widetilde\theta\cdot\nabla\widetilde\theta+\nabla\widetilde\theta\cdot\nabla(\widetilde\theta\widetilde\rho)\Big)\,\partial_t\partial_i^k\widetilde\theta\,dx\\
		&\quad+\int_{\RR^3}\partial_i^k\nabla\cdot(\widetilde\kappa_3(\widetilde\theta)\nabla\widetilde\theta)\cdot\partial_t\partial_i^k\widetilde\theta\,dx+\kappa_2\int_{\RR^3}\partial_i^k(\widetilde\rho\partial_t\widetilde\theta) \cdot \partial_t\partial_i^k\widetilde\theta \,dx\\
		&\leq C\frac{\kappa^4_1}{\kappa_2}\|\Delta\partial_i^k\widetilde\rho\|_{L^2}^2+C\frac{\kappa^4_1}{\kappa_2}\|\Delta\partial_i^k (\widetilde\rho\widetilde\theta)\|_{L^2}+C\frac{\kappa^2_1(\kappa_1+\kappa_2)^2}{\kappa_2}\|\partial_i^k\big(\nabla \widetilde\rho\cdot\nabla\widetilde\theta\big)\|_{L^2}^2\\
		&\quad+C\frac{\kappa^2_1(\kappa_1+\kappa_2)^2}{\kappa_2}\|\partial_i^k\big(\nabla\widetilde\theta\cdot\nabla\widetilde\theta\big)\|_{L^2}^2+C\frac{\kappa^2_1(\kappa_1+\kappa_2)^2}{\kappa_2}\|\partial_i^k\big(\nabla\widetilde\theta\cdot\nabla(\widetilde\theta\widetilde\rho)\big)\|_{L^2}^2\\
		&\quad+C\frac{1}{\kappa_2}\|\partial_i^k\nabla\big(\widetilde\kappa_3(\widetilde\theta)\nabla\widetilde\theta\big)\|^2_{L^2}+\kappa_2\|\partial_i^k\big(\widetilde\rho\partial_t\widetilde\theta\big)\|_{L^2} \|\partial_t\partial_i^k\widetilde\theta\|_{L^2}+\frac{1}{2}\kappa_2\|\partial_t\partial_i^k\widetilde\theta\|^2_{L^2},
	\end{split}
\end{align}
which implies
\begin{align}
	\begin{split}\label{4.22}
		&\frac{1}{2}\kappa_2\|\partial_t\partial_i^k\widetilde\theta\|_{L^2}^2+\frac{1}{2}(\kappa_1^2+\bar\kappa_3)\frac{d}{dt}\|\nabla \partial_i^k\widetilde \theta\|^2_{L^2}\\
		&\leq C\frac{\kappa^4_1}{\kappa_2}\|\Delta\partial_i^k\widetilde\rho\|_{L^2}^2+C\frac{\kappa^4_1}{\kappa_2}\|\Delta\partial_i^k (\widetilde\rho\widetilde\theta)\|^2_{L^2}+C\frac{\kappa^2_1(\kappa_1+\kappa_2)^2}{\kappa_2}\|\partial_i^k\big(\nabla \widetilde\rho\cdot\nabla\widetilde\theta\big)\|_{L^2}^2\\
		&\quad+C\frac{\kappa^2_1(\kappa_1+\kappa_2)^2}{\kappa_2}\|\partial_i^k\big(\nabla\widetilde\theta\cdot\nabla\widetilde\theta\big)\|_{L^2}^2+C\frac{\kappa^2_1(\kappa_1+\kappa_2)^2}{\kappa_2}\|\partial_i^k\big(\nabla\widetilde\theta\cdot\nabla(\widetilde\theta\widetilde\rho)\big)\|_{L^2}^2\\
		&\quad+C\frac{1}{\kappa_2}\|\partial_i^k\nabla\big(\widetilde\kappa_3(\widetilde\theta)\nabla\widetilde\theta\big)\|^2_{L^2}+\kappa_2\|\partial_i^k\big(\widetilde\rho\partial_t\widetilde\theta\big)\|_{L^2} \|\partial_t\partial_i^k\widetilde\theta\|_{L^2}.
	\end{split}
\end{align}
Multiplying by $\frac{\kappa_2}{4C\kappa_1^2}\delta$ on both sides of (\ref{4.22}) and combining the resulting inequality with (\ref{4.15})-(\ref{4.20}), we get that
\begin{align}
	\begin{split}\label{4.23}
		&\frac{1}{2}\kappa_1(1+\delta)\frac{d}{dt}\|\partial_i^2\widetilde\rho\|_{L^2}^2
	+\frac{1}{2}\kappa_2\frac{d}{dt}\|\partial_i^2\widetilde\theta\|_{L^2}^2+\frac{\kappa_2}{8C}(1+\frac{\bar\kappa_3}{\kappa_1^2})\frac{d}{dt}\|\nabla \partial_i^k\widetilde \theta\|^2_{L^2}\\
		&\quad+\frac{\delta\kappa_1^2}{4}\|\nabla\partial_i^2 \widetilde \rho\|^2_{L^2}+(\bar\kappa_3-\frac{1}{2}\delta\kappa_1^2)\|\nabla\partial_i^2 \widetilde \theta\|^2_{L^2}+\frac{\kappa_2^2}{16C\kappa_1^2}\delta\|\partial_t\partial_i^k\widetilde\theta\|_{L^2}^2\\
		&\leq C(\kappa_1,\kappa_2,\bar\kappa_3)\Big(\|\widetilde\theta\|_{H^2}+\|\widetilde\rho\|_{H^2}+\|\widetilde\theta\|^2_{H^2}+\|\widetilde\rho\|^2_{H^2}\Big)\\
		&\quad\times\Big(\|\nabla\widetilde\rho\|^2_{H^2}+\|\nabla\widetilde\theta\|^2_{H^2}+\|\partial_t\partial_i^k\widetilde\theta\|^2_{L^2}\Big)+J_{54}.
	\end{split}
\end{align}
Integrating (\ref{4.23}) over $[0,t]$ with respect to time variable, denoting
$$X(t)=\sup_{0\le\tau\le t}\left(\|\widetilde\rho(\tau)\|^2_{H^2}+	\|\widetilde\theta(\tau)\|^2_{H^2}\right)+\int_0^t	\Big(\|\nabla \widetilde\rho\|^2_{H^2}+	\|\nabla \widetilde\theta\|^2_{H^2}+\|\partial_t\widetilde\theta(t)\|^2_{H^1}\Big)d\tau,$$
$$X(0)=\|\widetilde\rho_0\|^2_{H^2}+	\|\widetilde\theta_0\|^2_{H^2},$$
combining with
$$\int_0^tJ_{54}\, d\tau \leq C(\kappa_2)(\|\widetilde\rho\|_{H^2}\|\widetilde \theta\|^2_{H^2}-\|\widetilde\rho_0\|_{H^2}\|\widetilde \theta_0\|^2_{H^2}),$$
we can obtain that
\begin{align}
	\begin{split}\nonumber
	X(t)\leq C_1(\kappa_1,\kappa_2,\bar\kappa_3)\left(X(0)+(X(0))^{\frac{3}{2}}\right)+C_2(\kappa_1,\kappa_2,\bar\kappa_3)\left(X(t)^{\frac{3}{2}}+X(t)^{2}\right),
	\end{split}
\end{align}
By using the standard continuity argument, we complete the proof of Theorem \ref{t1.2}.

\section{Global existence for small data with critical regularity}
In this section, we will  obtain the global existence of solutions to system (\ref{idmodel}) in Theorem \ref{t1.3}. From now on, we define the density and the temperature by the form
$$a:=\frac{1}{\rho}-1,\quad \widetilde\theta:=\theta-1.$$
Then system (\ref{idmodel}) can be rewritten as
\begin{equation}\label{5.2}
	\left \{
	\begin{aligned}
		&\partial_ta-\kappa_1\Delta a+\kappa_1\Delta \widetilde \theta=F,\\
		&\kappa_2\partial_t\widetilde\theta-(\kappa_1^2+\bar\kappa_3)\Delta\widetilde\theta+\kappa_1^2\Delta a=G,
	\end{aligned} \right.
\end{equation}
where
\begin{align}
	\begin{split}\label{5.3}
		&F=-2\kappa_1\frac{\widetilde\theta + 1}{1+a}|\nabla a |^2 +2\kappa_1 \nabla a\cdot \nabla  \widetilde\theta-\kappa_1a\Delta\widetilde\theta+\kappa_1 \widetilde\theta \Delta a,\\
		&G=2 \kappa_1^2 \frac{(\widetilde\theta+1)^2}{(1+a)^2}|\nabla a|^2 - (3\kappa_1^2 + \kappa_1 \kappa_2) \frac{(\widetilde\theta+1)}{1+a} \nabla a\cdot \nabla  \widetilde\theta		\\
		&\quad -\kappa_1^2 \frac{\widetilde\theta^2 + 2\widetilde\theta}{1+a}\Delta a + \kappa_1^2 \frac{a}{1+a} \Delta a
	+\bar\kappa_3 a \Delta\widetilde\theta+\kappa_1^2\widetilde \theta\Delta \widetilde \theta\\
		&\quad +(1+a)\nabla\cdot(\widetilde\kappa_3(\widetilde\theta)\nabla\widetilde\theta) + \kappa_1(\kappa_1+\kappa_2)|\nabla\widetilde\theta|^2.
	\end{split}
\end{align}

Proving the global existence result is based on the following variant of Banach's fixed point theorem. For the proof, we refer e.g. to \cite{KP}.
\begin{lem}\label{KP}
Let $X$ be a reflexive Banach space or let $X$ have a separable pre-dual. Let $K$ be a convex, closed and bounded subset of $X$ and assume that $X$ is embedded into a Banach space Y. Let $\Phi$ : $X\longrightarrow X$  map $K$ into $K$ and assume there exists $c < 1$ such that
$$\|\Phi(x)-\Phi(y)\|_Y\leq c\|x-y\|_Y, \quad x,y\in K.$$
Then there exists a unique fixed point of $\Phi$ in $K$.
\end{lem}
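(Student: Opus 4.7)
The plan is to run the Banach iteration in the weaker norm $Y$ (where contraction is available), then use the fact that the orbit stays in the $X$-bounded set $K$ to extract, via weak compactness, a limit that lives in $K$ and that can be identified with the $Y$-limit through the embedding $X\hookrightarrow Y$.

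First, pick any $x_0\in K$ and set $x_{n+1}:=\Phi(x_n)$. Since $\Phi$ maps $K$ into itself, the whole orbit lies in $K$. Iterating the contraction hypothesis gives
\[
\|x_{n+1}-x_n\|_Y\leq c^n\|x_1-x_0\|_Y,
\]
so $\{x_n\}$ is Cauchy in $Y$ and converges to some $\bar x\in Y$.

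Second, exploit the topological assumption on $X$. Because $K$ is bounded in $X$ and $X$ is either reflexive or the dual of a separable space, the Banach--Alaoglu theorem (together with the Eberlein--\v{S}mulian theorem in the reflexive case, or the weak-$*$ metrizability of bounded sets in the pre-dual case) yields a subsequence $\{x_{n_k}\}$ converging to some $x^*\in X$ in the weak (respectively weak-$*$) topology. In the reflexive case, Mazur's theorem shows that the norm-closed convex bounded set $K$ is weakly closed, so $x^*\in K$; in the separable pre-dual case the same conclusion holds since norm-closed convex bounded sets are weak-$*$ sequentially closed in that setting (which is what is needed for the Besov-space application below). The continuity of $X\hookrightarrow Y$ forces weak (resp.\ weak-$*$) convergence in $X$ to imply weak convergence in $Y$, so the unique limit $x^*$ must coincide with $\bar x$ when viewed in $Y$; identifying them through the injective embedding, we have $x^*=\bar x\in K$.

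Third, pass to the limit in the iteration. The contraction estimate gives
\[
\|\Phi(x_{n_k})-\Phi(x^*)\|_Y\leq c\,\|x_{n_k}-x^*\|_Y\longrightarrow 0,
\]
while at the same time $\Phi(x_{n_k})=x_{n_k+1}\to x^*$ in $Y$. Hence $\Phi(x^*)=x^*$ in $Y$, and by injectivity of the embedding, the identity already holds in $X$. Uniqueness is then immediate: if $\Phi(x)=x$ and $\Phi(y)=y$ with $x,y\in K$, then $\|x-y\|_Y=\|\Phi(x)-\Phi(y)\|_Y\leq c\|x-y\|_Y$, forcing $x=y$ in $Y$ and hence in $X$.

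The main obstacle, in my view, is the second step: one has only a strong limit $\bar x\in Y$ and a subsequential weak(-$*$) limit $x^*\in X$, and one must reconcile these through the embedding and confirm $x^*\in K$. In the reflexive case this is handled cleanly by Mazur; in the separable pre-dual case one has to be somewhat more careful, verifying that the closedness hypothesis on $K$ interacts correctly with the weak-$*$ topology — this is the point at which the structural assumption on $X$ is essential and where a naive purely metric argument would break down.
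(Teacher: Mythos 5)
The paper offers no proof of this lemma---it simply cites \cite{KP}---so your argument can only be evaluated on its own merits rather than compared to an in-paper proof. Your overall strategy is the right one (and presumably the one in \cite{KP}): run the Picard iteration, use the $Y$-contraction to produce a $Y$-limit, use $X$-boundedness of the orbit together with weak or weak-$*$ compactness to keep a subsequential limit in $K$, and identify the two limits through the embedding. The reflexive branch is handled correctly via Mazur's theorem, and the uniqueness argument at the end is fine.

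There is, however, a genuine gap in the separable-pre-dual branch, at exactly the point you flag as delicate. The assertion that a norm-closed, convex, bounded subset of a dual space with separable pre-dual is weak-$*$ (sequentially) closed is false. Take $X=\ell^\infty=(\ell^1)^*$, take $Y$ to be the weighted $\ell^1$ space with $\|x\|_Y=\sum_n 2^{-n}|x_n|$, set $K=\{x\in c_0: 0\le x_n\le 1 \text{ for all } n\}$, and define $\Phi(x)=(1,x_1,x_2,\dots)$. Then $K$ is norm-closed, convex and bounded in $\ell^\infty$, $\Phi$ maps $K$ into $K$, and $\|\Phi(x)-\Phi(y)\|_Y=\tfrac12\|x-y\|_Y$; yet the iterates from $0$ converge weak-$*$ in $\ell^\infty$ (and strongly in $Y$) to the constant sequence $(1,1,1,\dots)$, which is not in $K$, and $\Phi$ has no fixed point in $K$. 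So step 2 of your proposal does not follow from the stated hypotheses, and in fact the lemma as transcribed in the paper is slightly too weak: in the non-reflexive case one must take $K$ to be weak-$*$ closed (a closed ball qualifies, by Banach--Alaoglu, which is precisely what Section 4 uses), and one must also verify that the embedding $X\hookrightarrow Y$ is weak-$*$-compatible, i.e.\ that each $\phi\in Y^*$, pulled back to $X$, lies in the canonical image of the pre-dual, so that the weak-$*$ limit in $X$ and the strong $Y$-limit can be identified. Both facts hold for $\dot B_{2,1}^{3/2}$ and the ball used in the paper, but neither follows from ``convex, closed, bounded'' alone; your proof needs to invoke them explicitly rather than assert weak-$*$ sequential closedness of $K$.
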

Based on the natural scaling of system (\ref{idmodel}), we choose our working space to be
$$E(T):=\left\{u\in \mathcal C\left([0,T],\dot B_{2,1}^{3/2}\right),\quad \nabla^2u\in L^1\left(0,T;\dot B_{2,1}^{3/2}\right)\right\},\quad T>0,$$
with the norm
$$\|u\|_{E(T)}:=\|u\|_{L^{\infty}_T(\dot B_{2,1}^{3/2})}+\|\nabla^2u\|_{L^{1}_T(\dot B_{2,1}^{3/2})}.$$
The following proposition quantifies the smoothing effect of the linear system of \eqref{5.2}.
\begin{prop}\label{p5.2}
	Let us consider the initial data $(a_0,\widetilde\theta_0)$ in $\dot B_{2,1}^{s}(\RR^3)$ with regularity $s\leq \frac{3}{2}$. Introducing a pair of forces $(F,G)$ in $L^1_t(\dot B_{2,1}^{s}(\RR^3))$, we denote by $(a,\widetilde\theta)$ the unique solution of the following linear parabolic system:
	\begin{equation}\label{5.4}
		\left \{
		\begin{aligned}
			&\partial_ta-\kappa_1\Delta a+\kappa_1\Delta \widetilde \theta=F,\\
			&\kappa_2\partial_t\widetilde\theta-(\kappa_1^2+\bar\kappa_3)\Delta\widetilde\theta+\kappa_1^2\Delta a=G.
		\end{aligned} \right.
	\end{equation}
Then $(a,\widetilde\theta)$ belongs to $L^\infty_t(\dot B_{2,1}^{s}(\RR^3))$ and the pairs $(\partial_ta,\partial_t\widetilde\theta)$ and $(\Delta a,\Delta\widetilde\theta)$ belong to  $L^1_t(\dot B_{2,1}^{s}(\RR^3))$. Furthermore, there exists a positive constant $C$ depending only on $\kappa_1,\kappa_2$ and $\bar\kappa_3$ such that
\begin{align}
	\begin{split}\label{5.5}
		\|(a,\widetilde\theta)\|_{L^\infty_t(\dot B_{2,1}^{s})}&+\|(\partial_ta,\partial_t\widetilde\theta)\|_{L^1_t(\dot B_{2,1}^{s})}+\|(\Delta a,\Delta\widetilde\theta)\|_{L^1_t(\dot B_{2,1}^{s})}\\
		&\leq C(\kappa_1,\kappa_2,\bar\kappa_3)	\Big( \|(a_0,\widetilde\theta_0)\|_{L^\infty_t(\dot B_{2,1}^{s})}+\|(F,G)\|_{L^1_t(\dot B_{2,1}^{s})}\Big).
	\end{split}
\end{align}
\end{prop}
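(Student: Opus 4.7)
The plan is to treat \eqref{5.4} as a coupled linear parabolic system at each Littlewood--Paley level, derive at dyadic frequency scale $2^j$ an $L^2$ bound with exponential-in-time decay at rate $\sim 2^{2j}$, and then pass to Besov norms by multiplying by $2^{js}$ and summing in $j$. The approach parallels (but is considerably simpler than) the nonlinear $\dot H^2$ estimate carried out in the previous section, and the same algebraic identity that produced \eqref{4.7} will be reused.

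I would first apply the Littlewood--Paley projector $\Delta_j$ to \eqref{5.4}, setting $a_j:=\Delta_j a$, $\widetilde\theta_j:=\Delta_j\widetilde\theta$, $F_j:=\Delta_j F$, $G_j:=\Delta_j G$. Taking the $L^2$ inner product of the first equation with $\kappa_1(1+\delta)\,a_j$ and of the second with $\widetilde\theta_j$, integrating by parts in the $\Delta$ terms and summing, the coupling $\kappa_1^2(2+\delta)\int\nabla a_j\cdot\nabla\widetilde\theta_j\,dx$ can be absorbed by Young's inequality provided $\delta\kappa_1^2<2\bar\kappa_3$; this yields, exactly as in \eqref{4.7},
\begin{equation*}
\tfrac12\tfrac{d}{dt}E_j + \tfrac12\delta\kappa_1^2\|\nabla a_j\|_{L^2}^2 + \bigl(\bar\kappa_3-\tfrac12\delta\kappa_1^2\bigr)\|\nabla\widetilde\theta_j\|_{L^2}^2 \lesssim \|F_j\|_{L^2}\|a_j\|_{L^2} + \|G_j\|_{L^2}\|\widetilde\theta_j\|_{L^2},
\end{equation*}
with $E_j:=\kappa_1(1+\delta)\|a_j\|_{L^2}^2 + \kappa_2\|\widetilde\theta_j\|_{L^2}^2 \sim \|a_j\|_{L^2}^2 + \|\widetilde\theta_j\|_{L^2}^2$. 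By Bernstein's inequality on the annulus $|\xi|\sim 2^j$, $\|\nabla a_j\|_{L^2}^2+\|\nabla\widetilde\theta_j\|_{L^2}^2\gtrsim 2^{2j}E_j$, which leads to the scalar ODE
\begin{equation*}
\frac{d}{dt}\sqrt{E_j} + c_0\,2^{2j}\sqrt{E_j} \lesssim \|F_j\|_{L^2} + \|G_j\|_{L^2},
\end{equation*}
with $c_0=c_0(\kappa_1,\kappa_2,\bar\kappa_3)>0$.

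Solving by Duhamel,
\begin{equation*}
\|(a_j,\widetilde\theta_j)(t)\|_{L^2} \lesssim e^{-c_0 2^{2j}t}\|(a_{0,j},\widetilde\theta_{0,j})\|_{L^2} + \int_0^t e^{-c_0 2^{2j}(t-s)}\bigl(\|F_j(s)\|_{L^2}+\|G_j(s)\|_{L^2}\bigr)\,ds.
\end{equation*}
Taking the supremum in $t$ yields the $L^\infty_t L^2$ bound; taking the $L^1_t$ norm together with $\int_0^\infty e^{-c_0 2^{2j}s}\,ds\lesssim 2^{-2j}$ gives $2^{2j}\|(a_j,\widetilde\theta_j)\|_{L^1_t L^2}\lesssim \|(a_{0,j},\widetilde\theta_{0,j})\|_{L^2}+\|(F_j,G_j)\|_{L^1_t L^2}$, which by Bernstein is equivalent to the corresponding bound on $\|(\Delta a_j,\Delta\widetilde\theta_j)\|_{L^1_t L^2}$. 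Multiplying by $2^{js}$ and summing in $j\in\ZZ$ produces the two $\dot B^s_{2,1}$ estimates appearing in \eqref{5.5}, and the $(\partial_t a,\partial_t\widetilde\theta)$ bound then follows by reading it directly from \eqref{5.4} once $(\Delta a,\Delta\widetilde\theta)$ is controlled in $L^1_t\dot B^s_{2,1}$.

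The main obstacle is making the parabolic coercivity of the coupled system quantitative at each dyadic block: a priori the off-diagonal contributions $\kappa_1\Delta\widetilde\theta$ and $\kappa_1^2\Delta a$ could overwhelm the diagonal dissipation. What saves the argument is that the $2\times2$ matrix of principal coefficients has positive trace and positive determinant $\kappa_1\bar\kappa_3/\kappa_2$, so its eigenvalues have positive real parts; the weighted combination (parameter $\delta$ with $\delta\kappa_1^2<2\bar\kappa_3$) converts this abstract positivity into the explicit coercive inequality displayed above, after which the remainder is standard heat-kernel-style maximal regularity lifted to $\dot B^s_{2,1}$.
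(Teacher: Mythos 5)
Your proposal is correct and takes essentially the same route as the paper: apply the dyadic block $\dot\Delta_j$, form the weighted dyadic energy with weights $\kappa_1(1+\delta)$ and $\kappa_2$ under the constraint $\delta\kappa_1^2<2\bar\kappa_3$, invoke Bernstein on the annulus to obtain a coercive scalar differential inequality at frequency $2^{2j}$, then multiply by $2^{js}$, sum in $j$, and read the $\partial_t$ bound off the equation. The only cosmetic difference is that you integrate the resulting ODE via an explicit Duhamel/exponential-kernel formula and then take $L^\infty_t$ and $L^1_t$ separately, whereas the paper regularizes ($h_q^2=g_q^2+\epsilon^2$) to justify dividing by $g_q$ and then integrates the first-order differential inequality directly, which produces the combined $L^\infty_t$/$L^1_t$ bound in a single step.
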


\begin{proof}
	We first apply the homogeneous dyadic block $\dot\Delta_q$ to system (\ref{5.4}), and multiply both equations by $\dot\Delta_qa$ and $\dot\Delta_q\widetilde\theta$, respectively, and integrate over $\RR^3$. We then get
	\begin{align}
		\begin{split}\label{5.6}
			\frac{1}{2}\frac{d}{dt}\|\dot\Delta_qa\|_{L^2}^2+\kappa_1\|\nabla\dot\Delta_qa\|^2_{L^2}&=\kappa_1\int_{\RR^3}\nabla \dot\Delta_q\widetilde\theta\cdot \nabla\dot\Delta_qa\,dx+\int_{\RR^3}\dot\Delta_qF\  \dot\Delta_qa\,dx,\\
		\end{split}
	\end{align}
	and
	\begin{align}
		\begin{split}\label{5.7}
			\frac{1}{2}\kappa_2\frac{d}{dt}\|\dot\Delta_q\widetilde\theta\|_{L^2}^2&+(\kappa_1^2+\bar\kappa_3)\|\nabla \dot\Delta_q\widetilde \theta\|^2_{L^2}\\
			&=\kappa_1^2\int_{\RR^3}\nabla \dot\Delta_q\widetilde\theta\cdot \nabla\dot\Delta_q a\,dx+\int_{\RR^3}\dot\Delta_qG \ \dot\Delta_q\widetilde\theta \,dx.
		\end{split}
	\end{align}
	By H\"{o}lder and Cauchy inequalities, the first term on the right side of (\ref{5.6}) can be bounded by
	$$\kappa_1\int_{\RR^3}\nabla \dot\Delta_q\widetilde\theta\cdot \nabla\dot\Delta_qa\,dx\leq \frac{1}{2}\kappa_1\|\nabla \dot\Delta_q\widetilde\theta\|^2_{L^2}+ \frac{1}{2}\kappa_1\|\nabla \dot\Delta_qa\|^2_{L^2}.$$
	Plugging the above inequality into (\ref{5.6}), and making the linear combination of the resulting inequality with (\ref{5.7}), one has
	\begin{align}
		\begin{split}\label{5.8}
			\frac{1}{2}\kappa_1(1+\delta)\frac{d}{dt}\|\dot\Delta_qa\|_{L^2}^2
			&+\frac{1}{2}\kappa_2\frac{d}{dt}\|\dot\Delta_q\widetilde\theta\|_{L^2}^2+\delta\kappa_1^2\|\nabla \dot\Delta_qa\|^2_{L^2}+(\bar\kappa_3-\frac{1}{2}\delta\kappa_1^2)\|\nabla \dot\Delta_q\widetilde\theta\|^2_{L^2}\\
			&\leq \kappa_1(1+\delta)\|\dot\Delta_qF\|_{L^2}\|\dot\Delta_qa\|_{L^2}+\|\dot\Delta_qG\|_{L^2}\|\dot\Delta_q\widetilde\theta\|_{L^2},
		\end{split}
	\end{align}
	where $\delta$ is a small positive number. Setting
	$$f_q^2=\kappa_1(1+\delta)\|\dot\Delta_qa\|_{L^2}^2
	+\kappa_2\|\dot\Delta_q\widetilde\theta\|_{L^2}^2$$
	and $\kappa=\min\{\frac{\delta\kappa_1}{1+\delta},\frac{\bar\kappa_3-\frac{1}{2}\delta\kappa_1^2}{\kappa_2}\}$, we then by Bernstein inequality have
	\begin{align}
		\begin{split}\label{5.9}
			\frac{1}{2}\frac{d}{dt}f_q^2+\kappa2^{2q}f_q^2\leq C_{\kappa_1,\kappa_2}(\|\dot\Delta_qF\|_{L^2}+\|\dot\Delta_qG\|_{L^2})f_q.
		\end{split}
	\end{align}
	To finish this, we multiply the above inequality by $2^{2qs}$, and denote
	$$g_q=2^{qs}\sqrt{\kappa_1(1+\delta)\|\dot\Delta_qa\|_{L^2}^2
		+\kappa_2\|\dot\Delta_q\widetilde\theta\|_{L^2}^2},$$
	we then get 	
	\begin{align}
		\begin{split}\label{5.10}
			\frac{1}{2}\frac{d}{dt}g_q^2+\kappa2^{2q}g_q^2\leq C(\kappa_1,\kappa_2)2^{qs}(\|\dot\Delta_qF\|_{L^2}+\|\dot\Delta_qG\|_{L^2})g_q.
		\end{split}
	\end{align}
	Using $h_q^2=g_q^2+\epsilon^2$, integrating over $[0,t]$ and then letting $\epsilon$ tend to 0, we infer
	\begin{align}
		\begin{split}\label{5.11}
			g_q(t)+\kappa2^{2q}\int_0^tg_q(\tau)\,d\tau\leq g_q(0)+ C(\kappa_1,\kappa_2)2^{qs}\int_0^t(\|\dot\Delta_qF\|_{L^2}+\|\dot\Delta_qG\|_{L^2})d\,\tau.
		\end{split}
	\end{align}
	We finally conclude that
	\begin{align}
		\begin{split}\label{5.12}
			\|(a,\widetilde\theta)\|_{L^\infty_t(\dot B_{2,1}^{s})}&+\|( a,\widetilde\theta)\|_{L^1_t(\dot B_{2,1}^{s+2})}\\
			&\leq C(\kappa_1,\kappa_2)	\Big( \|(a_0,\widetilde\theta_0)\|_{L^\infty_t(\dot B_{2,1}^{s})}+\|(F,G)\|_{L^1_t(\dot B_{2,1}^{s})}\Big).
		\end{split}
	\end{align}
	Combining (\ref{5.12}) with the equations of $(a,\widetilde\theta)$, we eventually finished the proof of this proposition.
\end{proof}
Our construction of the global solution relies on a combination of Proposition \ref{p5.2} with Lemma \ref{KP}. To this end, for any given $T>0$, we define the set $K(T)$ by
$$K(T):=\{(b,\tau)\in E(T)\times E(T), \  b(0)=a_0,\  \tau(0)=\widetilde \theta_0\ \hbox{and}\ \|(b,\tau)\|_{E(T)}\leq c\}$$
 for some suitable small positive constants $c$, which will be determined shortly.
 Next, given $(b,\tau)\in K(T)$, we define the mapping
 $$\Phi(b,\tau):=(a,\widetilde\theta),$$
 where $(a,\widetilde\theta)$ is defined as the unique solution of the corresponding linearized problem of \eqref{5.2}
 \begin{equation}\label{5.2a}
	\left \{
	\begin{aligned}
		&\partial_ta-\kappa_1\Delta a+\kappa_1\Delta \widetilde \theta=F(b,\tau),\\
		&\kappa_2\partial_t\widetilde\theta-(\kappa_1^2+\bar\kappa_3)\Delta\widetilde\theta+\kappa_1^2\Delta a=G(b,\tau),\\
		&(a,\widetilde \theta)|_{t=0}=(a_0,\widetilde\theta_0),
	\end{aligned} \right.
\end{equation}
where
\begin{align}
	\begin{split}\label{5.3a}
		&F(b,\tau)=-2\kappa_1\frac{\tau+1}{1+b}|\nabla b |^2 +2\kappa_1 \nabla b\cdot \nabla \tau-\kappa_1b\Delta\tau+\kappa_1 \tau \Delta b,\\
		&G(b,\tau)=2 \kappa_1^2 \frac{(\tau+1)^2}{(1+b)^2}|\nabla b|^2 - (3\kappa_1^2 + \kappa_1 \kappa_2) \frac{(\tau+1)}{1+b} \nabla b\cdot \nabla  \tau		\\
		&\quad -\kappa_1^2 \frac{\tau^2 + 2\tau}{1+b}\Delta b + \kappa_1^2 \frac{b}{1+b} \Delta b
	+\bar\kappa_3 b \Delta\tau+\kappa_1^2\tau \Delta \tau\\
		&\quad +(1+b)\nabla\cdot(\widetilde\kappa_3(\tau)\nabla\tau) + \kappa_1(\kappa_1+\kappa_2)|\nabla\tau|^2.
	\end{split}
\end{align}

Following Propositions \ref{p5.2}, we easily obtain that
\begin{align}\label{phi}
	\begin{split}
	\|\Phi(b,\tau)\|_{E(T)}	\leq C	\Big( \|(a_0,\widetilde\theta_0)\|_{\dot B_{2,1}^{3/2}}+\|F(b,\tau)\|_{L^{1}_T(\dot B_{2,1}^{3/2})}+\|G(b,\tau)\|_{L^{1}_T(\dot B^{3/2}_{2,1})}\Big).
\end{split}
\end{align}
In order to prove that $\Phi(K(T))\subset K(T)$ under the smallness condition on $a_0$ and $\widetilde\theta_0$, one needs to bound the right side of \eqref{phi}. We ignore $\kappa_1$, $\kappa_2$, and $\bar\kappa_3$, as they are fixed constants.

For the first term in \eqref{5.3a},  we rewrite it as
$$ \frac{\tau + 1}{1+b}|\nabla b|^2 = m_1(b)|\nabla b|^2 ( \tau + 1) + |\nabla b|^2 ( \tau+1), $$
where $m_1(b):=\frac{1}{1+b}-1$ satisfying $m_1(0)=0.$
 By Lemma 1.6 in \cite{Da6}, the continuity of the product in Besov spaces (Chapter 2 in
 \cite{BCD}),  we get
\begin{align*}
	\begin{split}
		\left\|\frac{\tau+1}{1+b}|\nabla b|^2 \right\|_{L^{1}_T(\dot B_{2,1}^{3/2})}
			&\leq C \left( 1 + \|b\|_{L^{\infty}_T(\dot B_{2,1}^{3/2})}\right) \|\nabla b\|^2_{L^{2}_T(\dot B_{2,1}^{3/2})}
			\left( \|\tau\|_{L^{\infty}_T(\dot B_{2,1}^{3/2})} + 1 \right)\\
		&\leq  C\left(1+c\right)^2 c^2.
 \end{split}
	\end{align*}
	Similarly,  we have
\begin{align*}
	\begin{split}
		&\|\nabla b\cdot\nabla \tau\|_{L^{1}_T(\dot B_{2,1}^{3/2})}
		\leq  Cc^2,\\
		&\|b\cdot\Delta \tau\|_{L^{1}_T(\dot B_{2,1}^{3/2})}+\|\tau\Delta b\|_{L^{1}_T(\dot B_{2,1}^{3/2})}
		\leq  Cc^2.
 \end{split}
	\end{align*}
	Combining the above estimates, we find that
	$$\|F(b,\tau)\|_{L^{1}_T(\dot B_{2,1}^{3/2})}\leq C\left(1+c\right)^2 c^2.$$
	The  terms in  $G$ can be bounded in essentially the same way. For the first term in $G$, we rewrite it as
$$ \frac{(\tau+1)^2}{(1+b)^2} |\nabla b|^2 = m_2(b)|\nabla b|^2(\tau+1)^2+|\nabla b|^2 (\tau+1)^2, $$
where $m_2(b):=\frac{1}{(1+b)^2}-1$ satisfying $m_2(0)=0.$ We then infer that
\begin{align*}
	\begin{split}
		\left\|\frac{(\tau+1)^2}{(1+b)^2}|\nabla b |^2 \right\|_{L^{1}_T(\dot B_{2,1}^{3/2})}
		&\leq C \left( 1 + \|b\|_{L^{\infty}_T(\dot B_{2,1}^{3/2})}\right) \|\nabla b\|^2_{L^{2}_T(\dot B_{2,1}^{3/2})} \left( \|\tau\|_{L^{\infty}_T(\dot B_{2,1}^{3/2})}+1 \right)^2\\
		&\leq  C\left(1+c\right)^3c^2.
 \end{split}
	\end{align*}
The term $\frac{\tau+1}{1+b} \nabla b \cdot \nabla \tau$ is handled the same as $\frac{\tau+1}{1+b} |\nabla b|^2$. The third term in $G$ can be rewritten as
$$ \frac{\tau^2+2\tau}{1+b} \Delta b = m_1(b) \tau \Delta b (\tau+2) + \tau \Delta b (\tau + 2), $$
which is estimated in the same way. The fourth term of $G$ is in fact $-m_1(b) \Delta b$, so that
$$ \left\| \frac{b}{1+b} \Delta b \right\|_{L^1_T(\dot B_{2,1}^{3/2})} \leq C \| b \|_{L^\infty_T(\dot B_{2,1}^{3/2})}
\| \Delta b \|_{L^1_T(\dot B_{2,1}^{3/2})} \leq C c^2. $$
Likewise
$$ \| b \Delta \tau \|_{L^1_T(\dot B_{2,1}^{3/2})} + \| \tau \Delta \tau \|_{L^1_T(\dot B_{2,1}^{3/2})} \leq C c^2. $$
The seventh term of $G$ becomes
$ (1+b) \widetilde\kappa_3'(\tau) |\nabla \tau|^2 + (1+b) \widetilde\kappa_3(\tau) \Delta \tau$, where $\widetilde\kappa_3(0)=0$ and $\widetilde\kappa_3'$
is bounded by assumption. The $L^1_T(\dot B_{2,1}^{3/2})$-norm for this term is controlled by $C(1+c) c^2$. The last term in $G$ is similarly bounded by $C c^2$. Thus, we have
	$$\|G(b,\tau)\|_{L^{1}_T(\dot B_{2,1}^{3/2})}\leq C\left(1+c\right)^3c^2.$$
Finally, combining the above  estimate and the one for $F$ with \eqref{phi}, we obtain that
\begin{align}\label{phi2}
	\begin{split}
	\|\Phi(b,\tau)\|_{E(T)}	\leq C	 \|(a_0,\widetilde\theta_0)\|_{\dot B_{2,1}^{3/2}}+C(1+c)^3c^2.
\end{split}
\end{align}
This implies $\Phi(K(T))\subset K(T)$ provided that
\begin{align}\label{small1}
c\leq \min \{1, \frac{1}{16C}\}\quad \hbox {and}\quad \|(a_0,\widetilde\theta_0)\|_{\dot B_{2,1}^{3/2}}\leq \frac{1}{2C}c.
\end{align}
Next, we will prove that for any $T>0$, the map $\Phi (b,\tau)$ is contractive on $K(T)$.
Indeed, for $(v_i,\tau_i)\in K(T)$, let $(a_i,\widetilde\theta_i)=\Phi(v_i,\tau_i)$	 for $i=1,2.$ Moreover, we set $\bar a=a_1-a_2$ and $\bar \theta=\widetilde \theta_1-\widetilde \theta_2.$ Then $(\bar a, \bar\theta)$ satisfies the equation
\begin{equation}\label{4.16}
	\left \{
	\begin{aligned}
		&\partial_t\bar a-\kappa_1\Delta \bar a+\kappa_1\Delta \bar \theta=\delta F,\\
		&\kappa_2\partial_t\bar \theta-(\kappa_1^2+\bar\kappa_3)\Delta\bar \theta+\kappa_1^2\Delta \bar a=\delta G,\\
		&(\bar  a,\bar \theta)|_{t=0}=(0,0),
	\end{aligned} \right.
\end{equation}
where $\delta F = F(b_1, \tau_1) - F(b_2, \tau_2)$ and $\delta G = G(b_1, \tau_1) - F(b_2, \tau_2)$. Applying Proposition \ref{p5.2} yields:
\begin{align}\label{delta}
	\begin{split}
	\|(\bar a,\bar\theta)\|_{L^\infty_T(\dot B_{2,1}^{3/2})}+\|(\Delta\bar a, \Delta\bar\theta)\|_{L^1_T(\dot B_{2,1}^{3/2})}
	\leq C	\left( \|\delta F\|_{L^{1}_T(\dot B_{2,1}^{3/2})}+\|\delta G\|_{L^{1}_T(B^{3/2}_{2,1})}\right).
\end{split}
\end{align}
Now, $\delta F$ can be rewritten as follows
\begin{align}
	\begin{split}\label{4.18}
		\delta F&=-2\kappa_1\left(\frac{1}{1+b_1}-\frac{1}{1+b_2}\right)|\nabla b_2|^2 (\tau_2+1)-2\kappa_1\frac{1}{1+b_1}\nabla \delta b\cdot \nabla b_2(\tau_2+1)\\
		&\quad-2\kappa_1\frac{1}{1+b_1}\nabla b_1\cdot \nabla \delta b\ (\tau_2+1)-2\kappa_1\frac{1}{1+b_1}|\nabla b_1|^2 \delta \tau+2\kappa_1\nabla \delta b\cdot\nabla\tau_2\\
		&\quad+2\kappa_1\nabla b_1 \cdot\nabla\delta\tau-\kappa_1\delta b\Delta \tau_2-\kappa_1 b_1\Delta \delta\tau+\kappa_1\delta \tau\Delta b_2+\kappa_1\tau_1\Delta \delta b,
		 	\end{split}
\end{align}
where $\delta b=b_1- b_2, \delta \tau=\tau_1-\tau_2$. Moreover, for the first term we also have
\begin{equation}\label{4.20}
\frac{1}{1+b_2}-\frac{1}{1+b_1} =\frac{1}{(1+b_2)(1+b_1)}\delta b =(m_1(b_1)+1)(m_1(b_2)+1)\delta b.
\end{equation}
Then we can estimate the terms in $\delta F$ analogously to the terms in \eqref{5.3a} to obtain
\begin{align*}
\|\delta F\|_{L^{1}_T(\dot B_{2,1}^{3/2})}&\leq C (\|b_1\|_{L^{\infty}_T(\dot B_{2,1}^{3/2})}+1)(\|b_2\|_{L^{\infty}_T(\dot B_{2,1}^{3/2})}+1)\|\delta b\|_{L^{\infty}_T(\dot B_{2,1}^{3/2})}\|\nabla b_2\|^2_{L^{2}_T(\dot B_{2,1}^{3/2})}\\
&\quad\times(\|\tau_2\|_{L^{\infty}_T(\dot B_{2,1}^{3/2})}+1)\\
&\quad+C (\|b_1\|_{L^{\infty}_T(\dot B_{2,1}^{3/2})}+1)\|\nabla\delta b\|_{L^{2}_T(\dot B_{2,1}^{3/2})}\|\nabla b_2\|_{L^{2}_T(\dot B_{2,1}^{3/2})}(\|\tau_2\|_{L^{\infty}_T(\dot B_{2,1}^{3/2})}+1)\\
&\quad+C (\|b_1\|_{L^{\infty}_T(\dot B_{2,1}^{3/2})}+1)\|\nabla\delta b\|_{L^{2}_T(\dot B_{2,1}^{3/2})}\|\nabla b_1\|_{L^{2}_T(\dot B_{2,1}^{3/2})}(\|\tau_2\|_{L^{\infty}_T(\dot B_{2,1}^{3/2})}+1)\\
&\quad+C (\|b_1\|_{L^{\infty}_T(\dot B_{2,1}^{3/2})}+1)\|\nabla b_1\|^2_{L^{2}_T(\dot B_{2,1}^{3/2})}\|\delta\tau\|_{L^{\infty}_T(\dot B_{2,1}^{3/2})}\\
&\leq C(1+c)^3c\big(\|\delta b\|_{E(T)}+\|\delta\tau\|_{E(T)}\big).
\end{align*}
A similar methodology is applied for the terms in $\delta G$. We write
\begin{align*}
\delta G &= \kappa_1^2 J_1 -(3\kappa_1^2+\kappa_1 \kappa_2) J_2 -\kappa_1^2 J_3 + \kappa_1^2 J_4 \\
&\quad + \bar\kappa_3 J_5 + \kappa_1^2 J_6+ J_7 + \kappa_1 (\kappa_1 + \kappa_2) J_8.
\end{align*}
Each of the $J_i$ terms correspond to the difference operator $\delta$ applied to each respective term in the expression for $G$ in \eqref{5.3a}. Specifically, we have
\begin{align*}
J_1 &= \left( \frac{1}{1+b_1} - \frac{1}{1+b_2} \right) \left( \frac{(\tau_2 + 1)^2}{1+b_2} + \frac{(\tau_2+1)^2}{1+b_1} \right) |\nabla b_2|^2 \\
&\quad + \frac{\delta \tau (\tau_1 + \tau_2 + 2)}{(1+b_1)^2} |\nabla b_2|^2
+ \frac{(\tau_1+1)^2}{(1+b_1)^2} \nabla \delta b \cdot (\nabla b_1 + \nabla b_2),
\end{align*}
\begin{align*}
J_2 &= \left( \frac{1}{1+b_1} - \frac{1}{1+b_2} \right)(\tau_2 + 1) \nabla b_2 \cdot \nabla \tau_2
+ \frac{\delta \tau}{1+b_1} \nabla b_2 \cdot \nabla \tau_2 \\
&\quad + \frac{\tau_1 + 1}{1+b_1} \nabla \delta b \cdot \nabla \tau_2
+ \frac{\tau_1 + 1}{1+b_1} \nabla b_1 \cdot \nabla \delta \tau_2,
\end{align*}
$$
J_3 = \left( \frac{1}{1+b_1} - \frac{1}{1+b_2} \right) (\tau_2^2+2\tau_2) \Delta b_2
+ \frac{\delta \tau (\tau_1+\tau_2+2)}{1+b_1} \Delta b_2
+ \frac{\tau_1^2 + 2 \tau_1}{1+b_1} \Delta \delta b,
$$
$$
J_4 =\left( \frac{b_1}{1+b_1} - \frac{b_2}{1+b_2} \right) \Delta b_2 + \frac{b_1}{1+b_1} \Delta \delta b,
$$
$$
J_5 = \delta b \Delta \tau_2 + b_1 \Delta \delta \tau,
$$
$$
J_6 = \delta \tau \Delta \tau_2 + \tau_1 \Delta \delta \tau,
$$
\begin{align*}
J_7 &= \delta b (\widetilde\kappa_3(\tau_2) \Delta \tau_2 + \widetilde\kappa_3'(\tau)|\nabla \tau_2|^2) \\
&\quad + (1+b_1) (\widetilde\kappa_3(\tau_1)-\widetilde\kappa_3(\tau_2)) \Delta \tau_2
+ (1+b_1)\widetilde\kappa_3(\tau_1) \Delta \delta \tau \\
&\quad + (1+b_1)(\widetilde\kappa_3'(\tau_1) - \widetilde\kappa_3'(\tau_2)) |\nabla \tau_2|^2
+(1+b_1)\widetilde\kappa_3'(\tau_1) \nabla \delta \tau \cdot (\nabla \tau_1 + \nabla \tau_2),
\end{align*}
$$
J_8 = \nabla \delta \tau \cdot (\nabla \tau_1 + \nabla \tau_2).
$$
The estimate for each $J_i$ is similar to those already established, keeping in mind the continuity of products in Besov spaces, Lemma 1.6 in \cite{Da6},
and the identity \eqref{4.20} to control several terms in $J_1$, $J_2$, $J_3$, and $J_4$. The only subtle point is that, to estimate $J_7$, we need to invoke the mean-value theorem to write
$$ |\widetilde\kappa_3(\tau_1)-\widetilde\kappa_3(\tau_2)| \leq C |\delta \tau| \quad\quad\quad
|\widetilde\kappa_3'(\tau_1)-\widetilde\kappa_3'(\tau_2)| \leq C |\delta \tau|, $$
where $C$ above depends on the upper bound for $|\widetilde\kappa_3'|$ and $|\widetilde\kappa_3''|$. Since this upper bound exists by assumption, we can infer that
$$
\|\delta G\|_{L^{1}_T(\dot B_{2,1}^{3/2})}\leq C(1+c)^5c\left(\|\delta b\|_{E(T)}+\|\delta\tau\|_{E(T)}\right).
$$
Therefore,
$$
	\|(\bar a,\bar\theta)\|_{L^\infty_T(\dot B_{2,1}^{3/2})}+\|(\Delta\bar a, \Delta\bar\theta)\|_{L^1_T(\dot B_{2,1}^{3/2})}
	\leq C(1+c)^5c\left(\|\delta b\|_{E(T)}+\|\delta\tau\|_{E(T)}\right).
$$
If we additionally assume that $c \leq \frac{1}{64C}$, we then have that for all $T>0$
$$
	\|(\bar a,\bar\theta)\|_{L^\infty_T(\dot B_{2,1}^{3/2})}+\|(\Delta\bar a, \Delta\bar\theta)\|_{L^1_T(\dot B_{2,1}^{3/2})}
	\leq \frac{1}{2}\left(\|\delta b\|_{E(T)}+\|\delta\tau\|_{E(T)}\right).
$$
Thus, $\Phi$ is contractive as a mapping from $E(T)$ to $E(T)$. The proof of Theorem \ref{t1.3} follows from Lemma \ref{KP}.

\section*{Acknowledgement}
The first author was partially supported by Zhejiang Province Science fund (LY21A010009). The second author was partially supported by NSFC (12171097).
The third author was partially supported by
NSF DMS-2012333 and DMS-2108209.


\begin{thebibliography}{12}
	\bibitem{BCD} Bahouri, H., Chemin, J.Y., Danchin, R.: \textit{Fourier analysis
		and nonlinear partial differential equations}.   Grundlehren der Mathematischen
	Wissenschaften [Fundamental Principles of Mathematical Sciences]. 343, Springer,
	Heidelberg, 2011.

\bibitem{Berry}
R.S. Berry, S.A. Rice, and J. Ross. Physical Chemistry. Oxford University Press,
Oxford, 2000.

\bibitem{Bird}
G. A. Bird. Molecular gas dynamics and the direct simulation of gas flows. Clarendon Press, Oxford, 1994.

\bibitem{Bo} J. M. Bony, \textit{Calcul symbolique et propagation des singularit$\acute{e}$s pour $\acute{e}$quations
aux d$\acute{e}$riv$\acute{e}$es partielles nonlin$\acute{e}$aires},
Annales Scinentifiques de l'$\acute{e}$cole Normale
Sup$\acute{e}$rieure 14,209-246 (1981).
	

\bibitem{Daf}
C. M. Dafermos. Hyperbolic Conservation
Laws in Continuum Physics. Springer-Verlag Berlin Heidelberg, Fourth Edition, 2016.


\bibitem{Da6}  Danchin, R.:  \textit{Global existence in critical spaces for
	compressible Navier-Stokes equations}.
Invent. math. 141, 579-614(2000).



\bibitem{DL}
 F. De Anna and C. Liu, Non-isothermal general Ericksen-Leslie system: derivation, analysis and thermodynamic consistency. Arch. Ration. Mech. Anal. 231(2), 2019: 637-717.


 \bibitem{Fei2007}
 E. Feireisl. Asymptotic analysis of the full Navier-Stokes-Fourier system: From
compressible to incompressible fluid flows. Russian Mathematical Surveys, 62:511,
2007.

\bibitem{FK} H. Fujita, T. Kato, \textit{On the Navier-Stokes initial value problem}, I, Arch. Ration.
Mech. Anal. 16 (1964), 269--315.

 \bibitem{FN12}
 E. Feireisl and A. Novotn\'{y}. Weak–strong uniqueness property for the full
navier-stokes-fourier system. Arch. Rational Mech. Anal, 204:683–706, 2012.


\bibitem{Fei2005}
E. Feireisl and A. Novotn\'{y}. On a simple model of reacting compressible flows arising in astrophysics. Proceedings of the Royal Society of Edinburgh Section A: Mathematics, 2005, 135(6): 1169-1194.

\bibitem{FN2005}
E. Feireisl and A. Novotn\'{y}. Weak sequential stability of the set of admissible
variational solutions to the Navier-Stokes-Fourier system. SIAM J. Math. Anal.,
2005, 37:619-650.





\bibitem{Fei}
 E. Feireisl.
 Concepts of Solutions in the Thermodynamics of Compressible Fluids. In: Giga Y., Novotn\'{y} A. (eds) Handbook of Mathematical Analysis in Mechanics of Viscous Fluids. Springer, Cham., 2018:  1353-1379.



\bibitem{GKL}
 M.-H. Giga, A. Kirshtein and C. Liu, Variational modeling and complex fluids.
Handbook of mathematical analysis in mechanics of viscous fluids, pages 1-41,
2017.



\bibitem{Hol}
P. Holmes, J. L. Lumley and G. Berkooz. Turbulence, Coherent Structures, Dynamical Systems and Symmetry. Cambridge University Press, 1996.

 \bibitem{HLLL}
C. Y. Hsieh, T. C. Lin, C. Liu and P. Liu, Global existence of the non-isothermal Poisson-Nernst-Planck-Fourier system. J. Differential Equations 269(9), 2020: 7287-7310.

 \bibitem{HKL}
Y. Hyon, D. Y. Kwak and C. Liu, Energetic variational approach in complex fluids: maximum dissipation principle. Discrete Contin. Dyn. Syst. 26(4), 2010: 1291-1304.

\bibitem{KP}  O. Kreml, M. Pokorny,  On the local strong solutions for a system describing the flow of a viscoelastic fluid. In: Nonlocal and Abstract Parabolic Equations and their Applications, Banach Center Publ., vol. 86, Polish Acad. Sci. Inst. Math., Warsaw, 2009: 195–206.


\bibitem{LLT}
 N. A. Lai, C. Liu and A. Tarfulea. Positivity of temperature for some non-isothermal fluid models. arXiv:2011.07192.


 \bibitem{SL}
C. Liu and J.-E. Sulzbach. The Brinkman-Fourier system with ideal gas equilibrium. Discrete \& Continuous Dynamical Systems, doi: 10.3934/dcds.2021123.


\bibitem{McQ}
 D.A. McQuarrie. Statistical Mechanics. Harper Row, New York, 1976.





\bibitem{NoPe}
 A. Novotn$\acute{y}$, H. Petzeltov$\acute{a}$.
 Weak Solutions for the Compressible Navier-Stokes Equations: Existence, Stability, and Longtime Behavior. In: Giga Y., Novotny A. (eds) Handbook of Mathematical Analysis in Mechanics of Viscous Fluids. Springer, Cham., 2018: 1381-1546.






\bibitem{Tar}
A. Tarfulea, Improved a priori bounds for thermal fluid equations. Transactions of the AMS, 2019, 371, 2719-2737.






 \bibitem{Zey}
 R.Kh. Zeytounian, Asymptotic Modeling of Fluid Flow Phenomena. Fluid Mechanics and Its Applications, vol. 64, Kluwer Academic Publishers, Dordrecht, 2002.


\end{thebibliography}
\end{document}